\definecolor{shadecolor}{rgb}{0.95, 0.95, 0.86}
\newcommand{\C}{{\mathbb C}}
\newcommand{\supp}{\mathop \mathrm{supp}}
\newcommand{\semmi}[1]{}
\newcommand{\be}{\begin{equation}}
\newcommand{\ee}{\end{equation}}
\newtheorem{theorem}{Theorem}
\newtheorem{definition}{Definition}
\newtheorem{proposition}{Proposition}
\newtheorem{lemma}{Lemma}
\newtheorem{corollary}{Corollary}
\newtheorem*{remark}{Remark}
\renewenvironment{proof}[1][Proof]{\noindent {\bf #1.}\ }{\hfill {\bf Q.E.D.}}
\title{Equilibrium measures for a class of potentials with discrete rotational symmetries}
\author{F. Balogh, D. Merzi\\
\\
SISSA, via Bonomea 265, Trieste, Italy}
\begin{document}
\maketitle

\begin{abstract}
In this note the logarithmic energy problem with external potential $|z|^{2n}+tz^d+\bar{t}\bar{z}^d$ is considered in the complex plane, where $n$ and $d$ are positive integers satisfying $d\leq 2n$.
Exploiting the discrete rotational invariance of the potential, a simple symmetry reduction procedure is used to calculate the equilibrium measure for all admissible values of $n,d$ and $t$.

It is shown that, for fixed $n$ and $d$, there is a critical value $|t|=t_{cr}$ such that the support of the equilibrium measure is simply connected for $|t|<t_{cr}$ and has $d$ connected components for $|t|>t_{cr}$.
\end{abstract}


\section{Introduction}
The logarithmic energy problem with external potential $V(z)$ in the complex plane amounts to finding the minimizer of the \emph{electrostatic energy functional}
\be
{\mathcal I}_V(\mu) = \int\!\!\!\int \log\frac{1}{|z-w|}d\mu(z)d\mu(w)+\int V(z)d\mu(z)
\ee
in the space of probability measures in $\C$.
 Following \cite{Saff_Totik}, we say that  $V\colon \C \to (-\infty,\infty]$ is an \emph{admissible potential} if it is lower semi-continuous, the set $\{z\colon V(z) < \infty\}$ is of positive capacity and 
\be
V(z)-\log|z| \to \infty\quad  \mbox{ as } z\to \infty\ .
\ee
As shown in \cite{Saff_Totik}, for an admissible potential $V$ there exists a unique measure $\mu=\mu_V$ that minimizes the functional ${\mathcal I}_V$, referred to as the \emph{equilibrium measure} corresponding to $V$. Moreover, $\mu_V$ is the unique compactly supported measure of finite logarithmic energy for which the variational conditions
\begin{align}
\label{eq:var_eq}
V(z)+2\int \log\frac{1}{|z-w|}d\mu(w) &= F \quad  z \in \supp(\mu)\quad \mbox{quasi-everywhere}\\
\label{eq:var_ineq}
V(z)+2\int \log\frac{1}{|z-w|}d\mu(w) &\geq F \quad z \in \C \quad \mbox{quasi-everywhere}
\end{align}
are valid for some constant $F$ (a property is said to hold \emph{quasi-everywhere} on a set $S$ if it is valid at all points of $S$ minus a set of zero logarithmic capacity).

The equilibrium measure plays a fundamental role in the asymptotic questions of weighted approximation theory \cite{Saff_Totik}; it also appears naturally in the context of hermitian and normal matrix models with unitary invariant probability measures (see \cite{KKMWZ, Elbau_Felder, TBAZW, Elbau, Etingof_Ma, Takhtajan_Its, Bleher_Kuijlaars, BBLM} and references therein). 

The aim of this paper is to find the equilibrium measure for the potentials of the special form
\be
\label{ex_pot}
V(z) = \frac{1}{T}\left(|z|^{2n}-tz^d-\bar{t}\bar{z}^{d}\right)\ ,
\ee
where $n$ and $d$ are positive integers with $2n \geq d$, $T >0$ and 
\be
\left\{
\begin{array}{ll}
t \in \C & \mbox{ if }d < 2n\\
|t| < 1/2 &\mbox{ if } d=2n\ .
\end{array}
\right.
\ee
It is easy to see that these potentials are admissible. Moreover, these functions are invariant under the group of discrete rotations of order $d$:
\be
V\left(e^{\frac{2\pi i k}{d}}z\right) = V(z) \qquad k=0,\dots, d-1\ .
\ee
It is natural to expect that the corresponding equilibrium measure is invariant under the same group of symmetries, which motivates the following construction.
\begin{definition}
For a fixed positive integer $d$ and a Borel probability measure $\mu$ the associated \emph{$d$-fold rotated measure} $\mu^{(d)}$ is defined to be
\be
\mu^{(d)}  = \frac{1}{d}\sum_{k=0}^{d-1}\mu^{(d)}_k\ ,
\ee
where the $k$th summand is given by
\be
\mu^{(d)}_k(B) = \mu\left( \varphi_k^{-1}(B \cap S_k)\right)
\ee
for any Borel set $B\subseteq \C$, where
\be
S_k = \left\{z \in {\mathbb C}\  \colon \frac{2\pi k}{d}\leq \arg(z) < \frac{2\pi (k+1)}{d} \right\} \quad k = 0,\dots, d-1
\ee
and
\be
\varphi_k \colon \C \to S_k\ , \quad \varphi_k(re^{i\theta}) = r^{\frac{1}{d}}e^{\frac{i\theta}{d}}e^{\frac{2\pi i k}{d}}\ , \quad k = 0,\dots, d-1\ .
\ee
\end{definition}

\begin{remark}
Note that $z\in \supp(\mu^{(d)})$ iff $z^d \in \supp(\mu)$.
\end{remark}

\begin{center}
\includegraphics{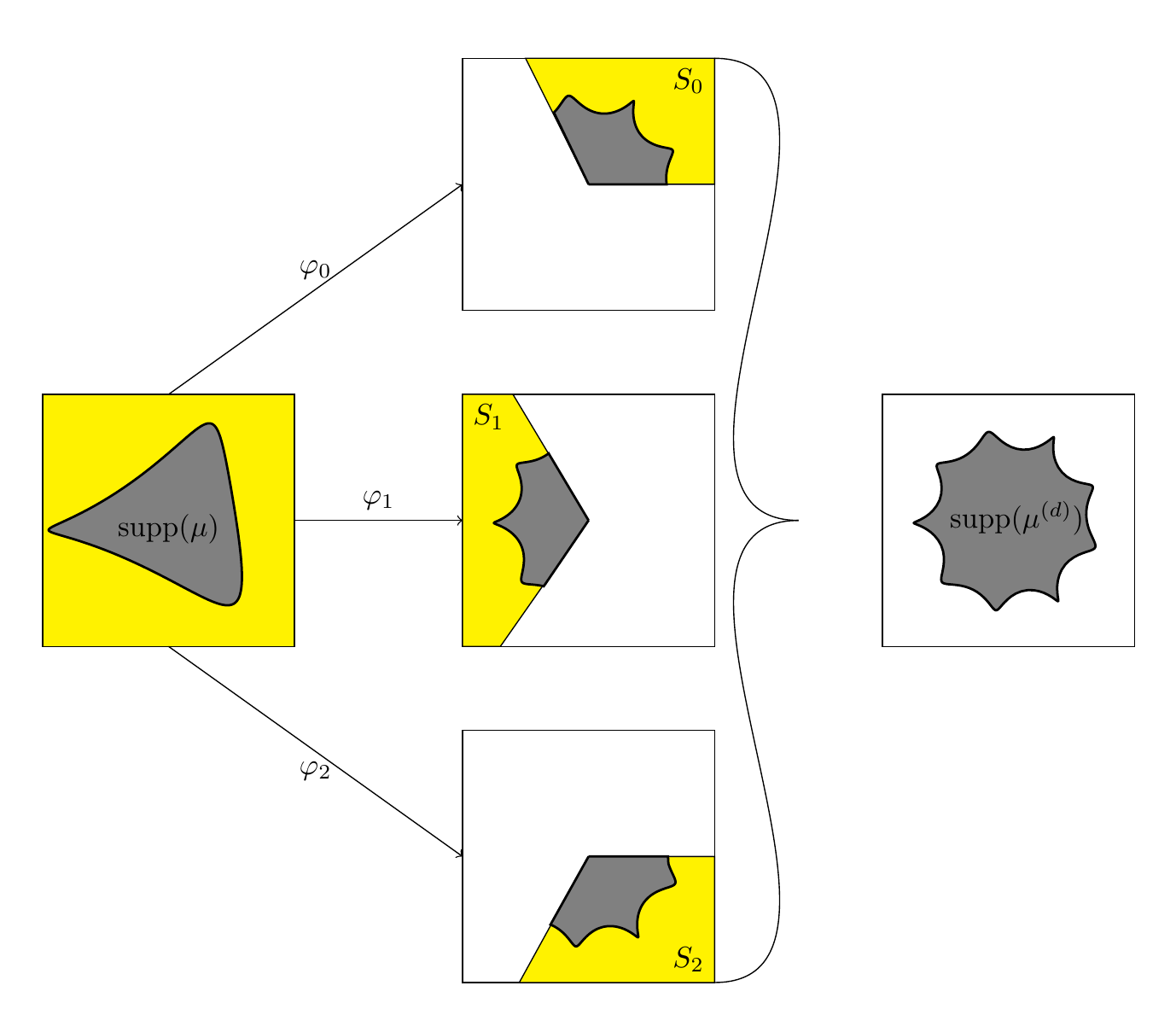}\\
{\bf Figure 1.} Illustration of the support of $\mu^{(d)}$ for $d=3$
\end{center}

\begin{shaded}
\begin{lemma}
\label{lemma:z_d_eq}
If the admissible potential $V(z)$ can be written in terms of another admissible potential $Q(z)$ as
\be
V(z) = \frac{1}{d}Q(z^d)
\ee
for some positive integer $d$, then the equilibrium measure for $V$ is given by the $d$-fold rotated equilibrium measure of $Q$, i.e., 
\be
\mu_V = \mu_Q^{(d)}\ .
\ee
\end{lemma}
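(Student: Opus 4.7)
The plan is to verify directly that $\mu_Q^{(d)}$ satisfies the variational conditions (\ref{eq:var_eq})--(\ref{eq:var_ineq}) for the potential $V$ and then invoke the uniqueness portion of the variational characterization of the equilibrium measure. The central ingredient is an identity relating the logarithmic potentials of $\mu_Q$ and $\mu_Q^{(d)}$. First I would note that $\mu_Q^{(d)}$ is a Borel probability measure: each $\mu_k^{(d)}$ is a pushforward of $\mu_Q$ onto the sector $S_k$ and hence has total mass one, and the prefactor $1/d$ normalises the sum correctly.

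The main computation is of the logarithmic potential
\[
U^{\mu_Q^{(d)}}(z) \;=\; \int \log\frac{1}{|z-w|}\, d\mu_Q^{(d)}(w).
\]
Splitting the integral over the sectors $S_k$ and changing variables via $w=\varphi_k(\zeta)$, which transports $\mu_Q$ onto $\mu_k^{(d)}|_{S_k}$, one obtains
\[
U^{\mu_Q^{(d)}}(z) \;=\; \frac{1}{d}\sum_{k=0}^{d-1}\int \log\frac{1}{|z-\varphi_k(\zeta)|}\, d\mu_Q(\zeta).
\]
The algebraic heart of the argument is the fact that, for each fixed $\zeta$, the values $\varphi_0(\zeta),\dots,\varphi_{d-1}(\zeta)$ are precisely the $d$ complex $d$-th roots of $\zeta$, so that
\[
\prod_{k=0}^{d-1}\bigl(z-\varphi_k(\zeta)\bigr) \;=\; z^d-\zeta.
\]
Taking logarithms and summing yields the clean identity $U^{\mu_Q^{(d)}}(z) = \tfrac{1}{d}\, U^{\mu_Q}(z^d)$. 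A parallel application of the same change of variables gives $\mathcal{I}_V(\mu_Q^{(d)}) = \tfrac{1}{d}\mathcal{I}_Q(\mu_Q)<\infty$, so $\mu_Q^{(d)}$ has finite logarithmic energy.

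With this identity in hand, the variational equality $Q(w)+2U^{\mu_Q}(w)=F_Q$ on $\supp(\mu_Q)$ and the global inequality $Q(w)+2U^{\mu_Q}(w)\geq F_Q$, both valid quasi-everywhere, pull back under $w=z^d$ and multiplication by $1/d$ to
\[
V(z)+2U^{\mu_Q^{(d)}}(z) \;=\; \frac{F_Q}{d} \ \text{ on }\ \supp(\mu_Q^{(d)}), \qquad V(z)+2U^{\mu_Q^{(d)}}(z) \;\geq\; \frac{F_Q}{d} \ \text{ on }\ \C.
\]
The Remark identifies the equality set correctly as $\supp(\mu_Q^{(d)}) = \{z\colon z^d\in\supp(\mu_Q)\}$, and uniqueness of the minimizer then forces $\mu_V=\mu_Q^{(d)}$, with equilibrium constant $F=F_Q/d$.

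The one point that requires a bit of care is the "quasi-everywhere" clause: one needs that the polynomial map $w=z^d$ pulls back sets of zero logarithmic capacity to sets of zero logarithmic capacity. This is the main (but entirely standard) technical obstacle; it is a classical fact of potential theory that non-constant holomorphic maps, and in particular polynomials, preserve the class of polar sets in both directions. Modulo this verification, the entire lemma is a direct consequence of the change-of-variables identity displayed above.
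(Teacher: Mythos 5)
Your proposal is correct and follows essentially the same route as the paper: establish the identity $U^{\mu_Q^{(d)}}(z)=\tfrac{1}{d}U^{\mu_Q}(z^d)$ via the change of variables and the factorization $\prod_{k=0}^{d-1}\bigl(z-\varphi_k(\zeta)\bigr)=z^d-\zeta$, then transport the variational conditions for $Q$ under $w=z^d$ and conclude by uniqueness of the equilibrium measure. Your explicit checks (probability normalization, finite energy, and preservation of polar sets under $z\mapsto z^d$) are standard details the paper leaves implicit, not a different argument.
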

\end{shaded}
The proof of this simple fact is given in Sec.~\ref{sec:z_d}. As a corollary, the problem of finding the equilibrium measure for the class \eqref{ex_pot} reduces to the study of the following family of potentials:
\be
\label{ex_pot_reduced}
Q(z) = \frac{d}{T}\left(|z|^{\frac{2n}{d}}-tz-\bar{t}\bar{z}\right)\ .
\ee
By differentiating \eqref{eq:var_eq} it is easy to find the density of $\mu_Q$ with respect to the Lebesgue measure $dA$ in the complex plane as
\be
d\mu_Q(z)= \frac{1}{4\pi}\Delta Q(z)\chi_K(z)dA(z)=\frac{n^2}{\pi Td}|z|^{\frac{2n}{d}-2}\chi_K(z)dA(z)\ ,
\ee
where $K=\supp(\mu_Q)$ and $\chi_K$ stands for the characteristic function of $K$.

The support set $K$ will be expressed in terms of its exterior uniformizing map
\be
f\colon \{u\in \C \colon\ |u| >1\} \to \C\setminus K
\ee
analytic and univalent for $|u|>1$ and fixed by the asymptotic behaviour
\be
\label{eq:conformal_normalized}
f(\infty)=\infty\ , \quad f(u) = ru\left(1+{\mathcal O}\left(\frac{1}{u}\right)\right)\quad u \to \infty\ ,
\ee
where $r$ is real and positive, called the \emph{conformal radius} of $K$.

\begin{center}
\includegraphics{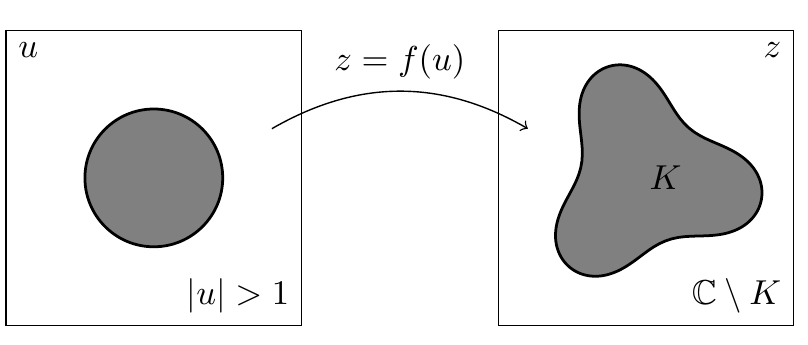}\\
{\bf Figure 2.} Illustration of the exterior conformal map
\end{center}
\begin{remark}
If the support $K$ of the measure $\mu$ contains $z=0$ and it is given by exterior uniformizing map 
\be
f(u)=rug(u) 
\ee
with
\be
g(u)\not =0 \quad |u|>1\ , \qquad g(u)=1+\mathcal{O}\left(\frac{1}{u}\right) \quad u \to \infty
\ee
 then the support of $\mu^{(d)}$ has the conformal map
\be
u \mapsto f(u^d)^{\frac{1}{d}} =r^{\frac{1}{d}}ug(u^d)^{\frac{1}{d}}
\ee 
which is well-defined, univalent in $|u|>1$ and normalized according to \eqref{eq:conformal_normalized}.
\end{remark}

\begin{remark}
The solution for the special cases $d=n$ and $d=2n$ are well-known.

For $d=n$ the reduced potential is
\be
\label{circ_pot}
Q(z) = \frac{n}{T}\left(|z|^2-tz-\bar{t}\bar{z}\right)=\frac{n}{T}|z-\bar{t}|^2 -\frac{n}{T}|t|^2\ ,
\ee
and therefore the equilibrium measure is the normalized area measure on the disk
\be
K=\left\{z\colon\ |z-\bar{t}| \leq \sqrt{\frac{T}{n}}\right\}\ ,
\ee
with exterior conformal map
\be
f(u)=ru+\bar{t} = ru\left(1+\frac{\bar{t}}{r}\frac{1}{u}\right)\ , \qquad r= \sqrt{\frac{T}{n}}\ .
\ee
For $d=2n$ we have
\be
Q(z) = \frac{2n}{T}\left(|z|-tz-\bar{t}\bar{z}\right)\ ,
\ee
which is just a re-parametrization of the well-known case
\be
\tilde Q(z) =\frac{1}{2}Q(z^2)= \frac{n}{T}\left(|z|^2-tz^2-\bar{t}\bar{z}^2\right)\ .
\ee
The equilibrium measure for $\tilde Q$ is the normalized area measure on an ellipse with exterior conformal map \cite{DFGIL, TBAZW, Elbau_Felder}
\be
\tilde f(u) = \sqrt{r}u\left(1+\frac{2\bar{t}}{u^2}\right)\ , \qquad r=\frac{T}{n}\frac{1}{1-4|t|^2}\ .
\ee
By applying Lemma \ref{lemma:z_d_eq} for $\tilde Q \to Q$ we get that the uniformizing map for the support of $\mu_Q$ is
\be
f(u) = \tilde f(\sqrt{u})^{2}=ru\left(1+\frac{2\bar{t}}{u}\right)^{2}\ .
\ee
Given the admissibility condition $|t|<\frac{1}{2}$ it is easy to see that $f(u)$ is univalent in $|u|>1$.
\end{remark}

Our main result is summarized by the following theorem and its immediate corollary.
\begin{shaded}
\begin{theorem} 
\label{thm:main} Let $0<d<n$ or $n<d<2n$, and define the critical value of the parameter $t$ as
\be
t_{cr}=\frac{n}{d}\left(\frac{T}{2n-d}\right)^{\frac{2n-d}{2n}}\ .
\ee

The equilibrium measure for the potential
\be
Q(z)=\frac{d}{T}\left(|z|^{\frac{2n}{d}}-tz-\bar{t}\bar{z}\right)
\ee
is absolutely continuous with respect to the area measure with density
\be
d\mu_Q(z) = \frac{n^2}{\pi Td}|z|^{\frac{2n}{d}-2}\chi_K(z)dA(z)\ ,
\ee
and its support $K$ is simply connected. The exterior uniformizing map of $K$ is of the form
\be
\label{eq:conformal_map}
f(u) = \left\{
\begin{array}{ll}
\displaystyle ru\left(1-\frac{\alpha}{u}\right)^{\frac{d}{n}} & |t| \leq t_{cr}\\
&\\
\displaystyle r\left(u-\frac{1}{\bar{\alpha}}\right)\left(1-\frac{\alpha}{u}\right)^{\frac{d}{n}-1} & |t| > t_{cr}\ ,
\end{array}
\right.
\ee
where $r=r(t)$ and $\alpha=\alpha(t)$ are given as follows:
\begin{itemize}
\item {\bf Pre-critical case $0<|t|< t_{cr}$ :}\\
The radius $r=r(t)$ is a particular solution of the equation
\be
\label{eq:r}
r^{\frac{4n}{d}-2}-\frac{T}{n}r^{\frac{2n}{d}-2}+\frac{n-d}{n}\frac{d^2}{n^2}|t|^2=0\ .
\ee
\begin{enumerate}
\item For $0< d <n$, \eqref{eq:r} has two distinct positive solutions $0 < r_{-}(t) < r_{+}(t)< \left(\frac{T}{n}\right)^{\frac{d}{2n}}$ such that $r_{-}(0)=0$ and $r_{+}(0)= \left(\frac{T}{n}\right)^{\frac{d}{2n}}$ and $r_{-}(t_{cr})=r_{+}(t_{cr})$. The conformal map corresponds to the choice  $r=r_{+}(t)$.
\item For $n <d<2n$,  \eqref{eq:r} has a unique positive solution $r=r(t)$.
\end{enumerate}
The value of the parameter $\alpha$ is given in terms of $r$ as
\be
\alpha = -\frac{d}{n}\bar{t}r^{-\frac{2n-d}{d}}\ .
\ee
\item {\bf Critical case $|t|= t_{cr}$ :}\\
The parameters $r$ and $\alpha$ are given explicitly by
\be
r=r_{cr}= \left(\frac{T}{2n-d}\right)^{\frac{d}{2n}}\ ,
\ee
and
\be
\alpha=\alpha_{cr} =-\frac{\bar{t}}{|t|}\ .
\ee
\item {\bf Post-critical case $|t|> t_{cr}$ :}\\
The parameters $r$ and $\alpha$ are given explicitly by
\be
r= \left(\frac{T}{2n-d}\right)^{\frac{1}{2}}\left(\frac{d}{n}|t|\right)^{\frac{d-n}{2n-d}}\ ,
\ee
and
\be
\alpha  =-\frac{\bar{t}}{|t|}\left(\frac{T}{2n-d}\right)^{\frac{1}{2}}\left(\frac{d|t|}{n}\right)^{-\frac{n}{2n-d}}\ .
\ee
\end{itemize}
\end{theorem}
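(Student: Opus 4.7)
The plan is to reduce via Lemma \ref{lemma:z_d_eq} to the problem of finding $\mu_Q$ and then to determine $K=\supp(\mu_Q)$ through its exterior conformal map $f$. Differentiating \eqref{eq:var_eq} shows that on $K$ the Cauchy transform $C_\mu(z):=\int d\mu_Q(w)/(z-w)$ coincides with $\partial_z Q(z)=\frac{n}{T}|z|^{\frac{2n}{d}-2}\bar z-\frac{d}{T}t$, and the density formula follows from $\partial_{\bar z}C_\mu=-\pi\rho$ together with the direct computation $\Delta|z|^{2n/d}=4(n/d)^2|z|^{\frac{2n}{d}-2}$. By Saff--Totik uniqueness, the proof reduces to exhibiting a compact $K$ with a probability measure $\mu$ of the stated density whose logarithmic potential satisfies \eqref{eq:var_eq} on $K$ and \eqref{eq:var_ineq} outside.

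To find $K$, I would parametrize $\partial K$ by $u\in\{|u|=1\}$ through $f$, so that $\bar z|_{\partial K}$ becomes the boundary value of the Schwarz-reflected map $u\mapsto\overline{f(1/\bar u)}$. The core computation is that for the pre-critical ansatz $f(u)=ru(1-\alpha/u)^{d/n}$ one finds
\[
|f(u)|^{\frac{2n}{d}-2}\,\overline{f(u)}\,\Big|_{|u|=1}=\frac{r^{\frac{2n-d}{d}}}{u}\bigl(1-\tfrac{\alpha}{u}\bigr)^{1-\frac{d}{n}}(1-\bar\alpha u),
\]
where all fractional-power factors conspire so as to be analytic on $|u|>1$. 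Hence $C_\mu(f(u))$, known on $|u|=1$ via the formula of the previous paragraph, automatically extends analytically to $|u|>1$; demanding the correct asymptotics $C_\mu(f(u))=1/(ru)+O(1/u^2)$ at $u=\infty$ then produces two scalar equations. Matching the constant term gives $\alpha=-(d/n)\bar t\,r^{-(2n-d)/d}$, and matching the $1/u$ coefficient, after substituting $|\alpha|^2$, gives exactly \eqref{eq:r}. A branch analysis of \eqref{eq:r} together with the univalence requirement $|\alpha|\leq 1$ selects $r_{+}(t)$ when $0<d<n$ and the unique positive root when $n<d<2n$, and these are admissible precisely for $|t|\leq t_{cr}$.

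For the post-critical regime, univalence of the pre-critical ansatz breaks down exactly at $|\alpha|=1$, which by the constant-term relation is equivalent to $|t|=t_{cr}$; at that point $f$ acquires a critical point on $|u|=1$ and the explicit values $r=r_{cr}$, $\alpha_{cr}=-\bar t/|t|$ emerge. To continue beyond, I would use the ansatz $f(u)=r(u-1/\bar\alpha)(1-\alpha/u)^{d/n-1}$, chosen because it coincides with the pre-critical form at $|\alpha|=1$, providing the natural continuation across $t_{cr}$. Running the same scheme---substitute on $|u|=1$, demand analyticity in $|u|>1$, and match at $\infty$---closes the system and yields the closed-form values of $r$ and $\alpha$ claimed in the statement. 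Here $|\alpha|<1$ strictly while the zero of the first factor sits at $|u|=1/|\alpha|>1$, so that $0\notin K$.

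The main obstacles are the univalence of $f$ in the post-critical regime and the global inequality \eqref{eq:var_ineq}. Univalence I would check by a direct computation of $f'$ combined with an argument-principle estimate for the image of $|u|=1$, monitoring how the critical point leaves the unit circle as $|t|$ crosses $t_{cr}$. For \eqref{eq:var_ineq}, I would set $g(z)=Q(z)+2\int\log|z-w|^{-1}d\mu(w)-F$, verify that $g=0$ and $\nabla g=0$ on $\partial K$ by construction, pull $\partial_z g=\partial_z Q-C_\mu$ back under $f$ to get an explicit expression in $u$, and show that this expression is non-vanishing for $|u|>1$. Combined with $g(z)\to\infty$ at infinity, the absence of critical points outside $K$ forces $g>0$ there and completes the proof. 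This last verification is where the bulk of the technical work lies, especially in the post-critical regime where the topology of $K$ is more delicate.
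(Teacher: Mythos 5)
Your proposal follows essentially the same route as the paper: the same pre- and post-critical ansätze for $f$ (your post-critical form is the paper's Blaschke-factor ansatz with $\beta=1/\bar{\alpha}$), two matching conditions whose constant-term and $1/u$-coefficient equations are exactly the paper's deformation and total-mass conditions, the same branch analysis of the equation for $r$ with $|\alpha|\leq 1$ yielding $t_{cr}$ and the explicit post-critical parameters, and the same verification of the variational inequality by excluding critical points of the effective potential in $|u|>1$. The only cosmetic differences are that the paper motivates the ansätze via the Entov--Etingof singularity correspondence (Fej\'er--Riesz factorization) rather than direct Cauchy-transform matching, and proves univalence via a starlikeness criterion and boundary injectivity instead of your sketched argument-principle check.
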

\end{shaded}
\newpage
\newcommand{\tvalue}[1]{\raisebox{3\height}{\parbox{2cm}{\begin{center}$t=t_{cr}#1$\end{center}}}}
\begin{center}
\includegraphics[width=0.24\textwidth]{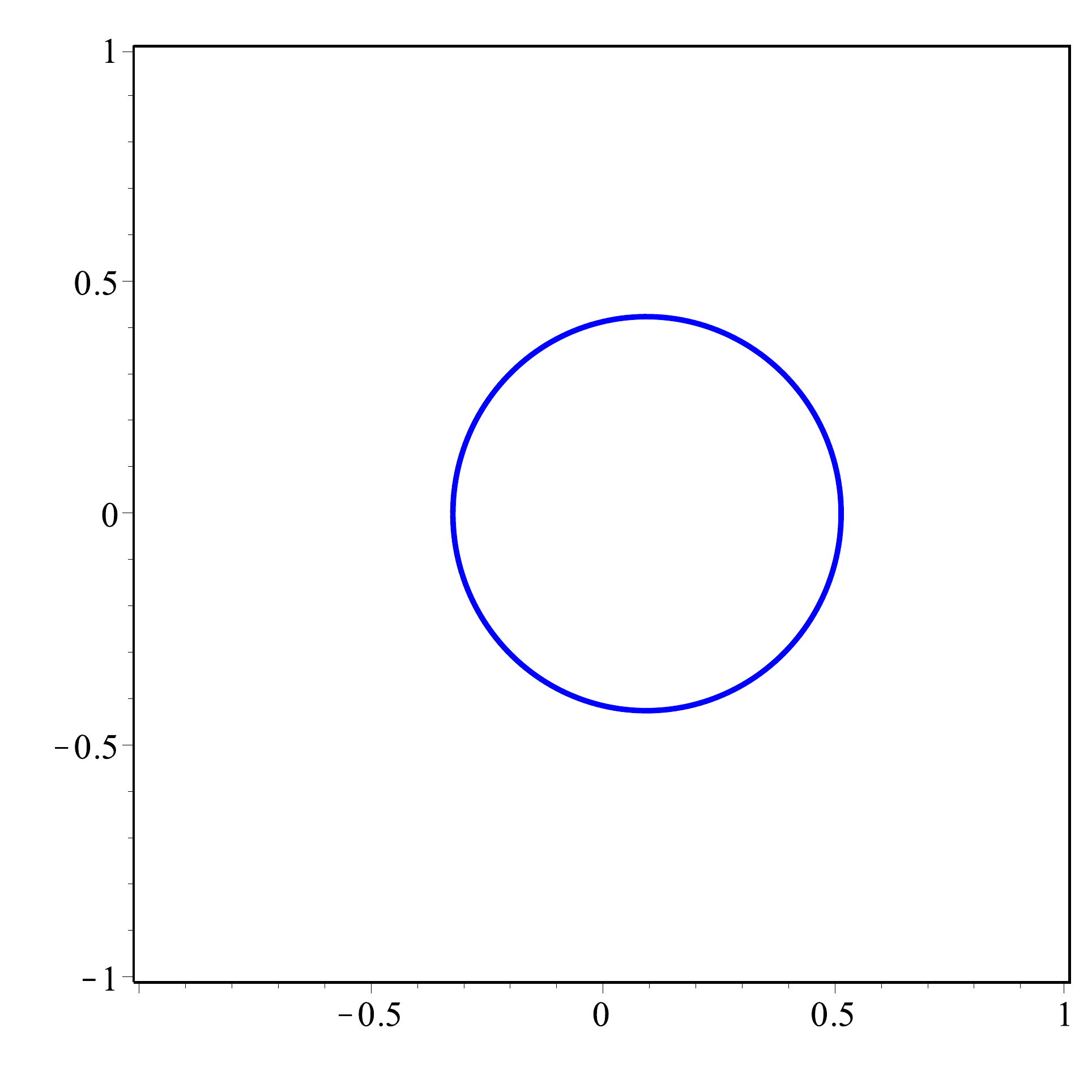}\tvalue{-0.2}\includegraphics[width=0.24\textwidth]{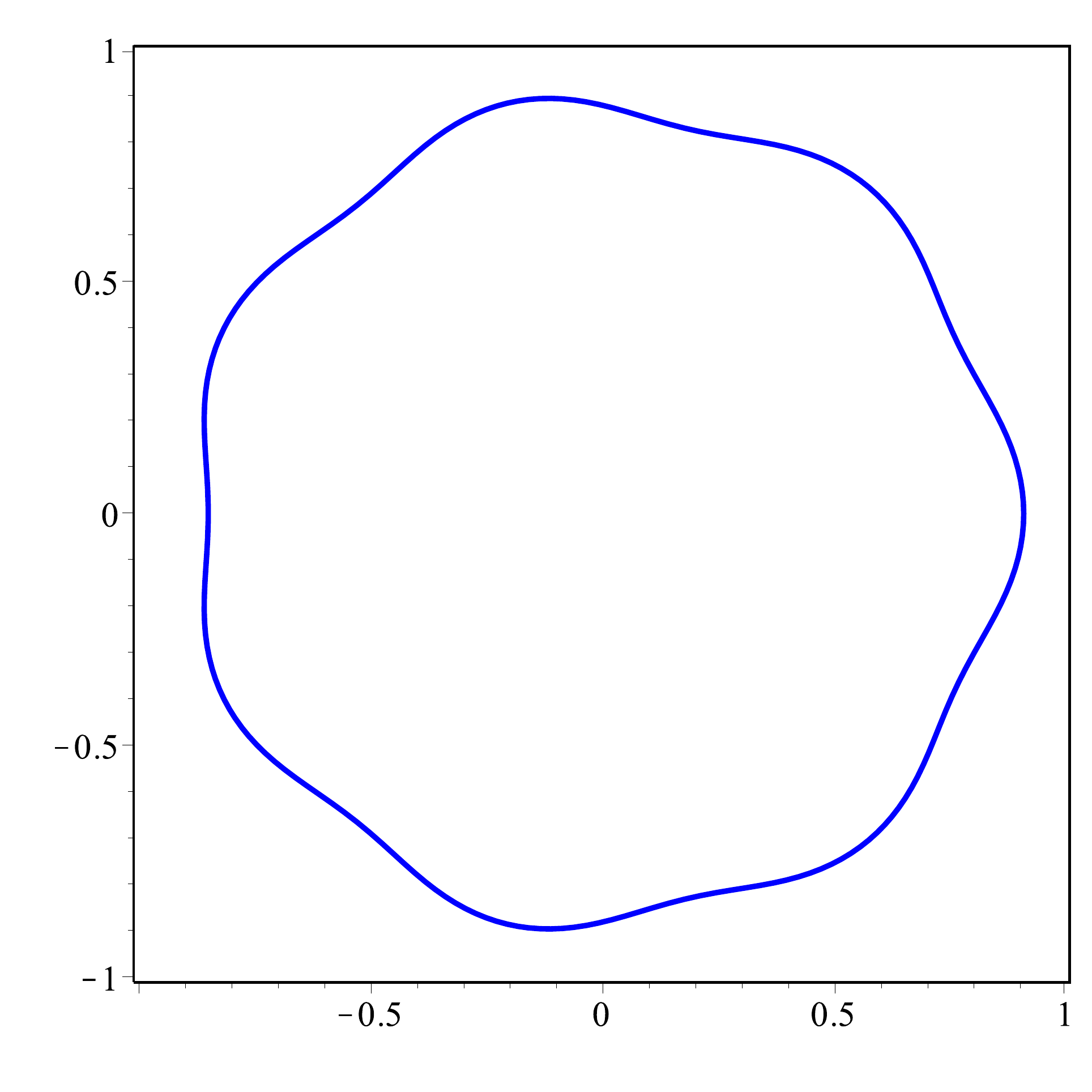}\\
\includegraphics[width=0.24\textwidth]{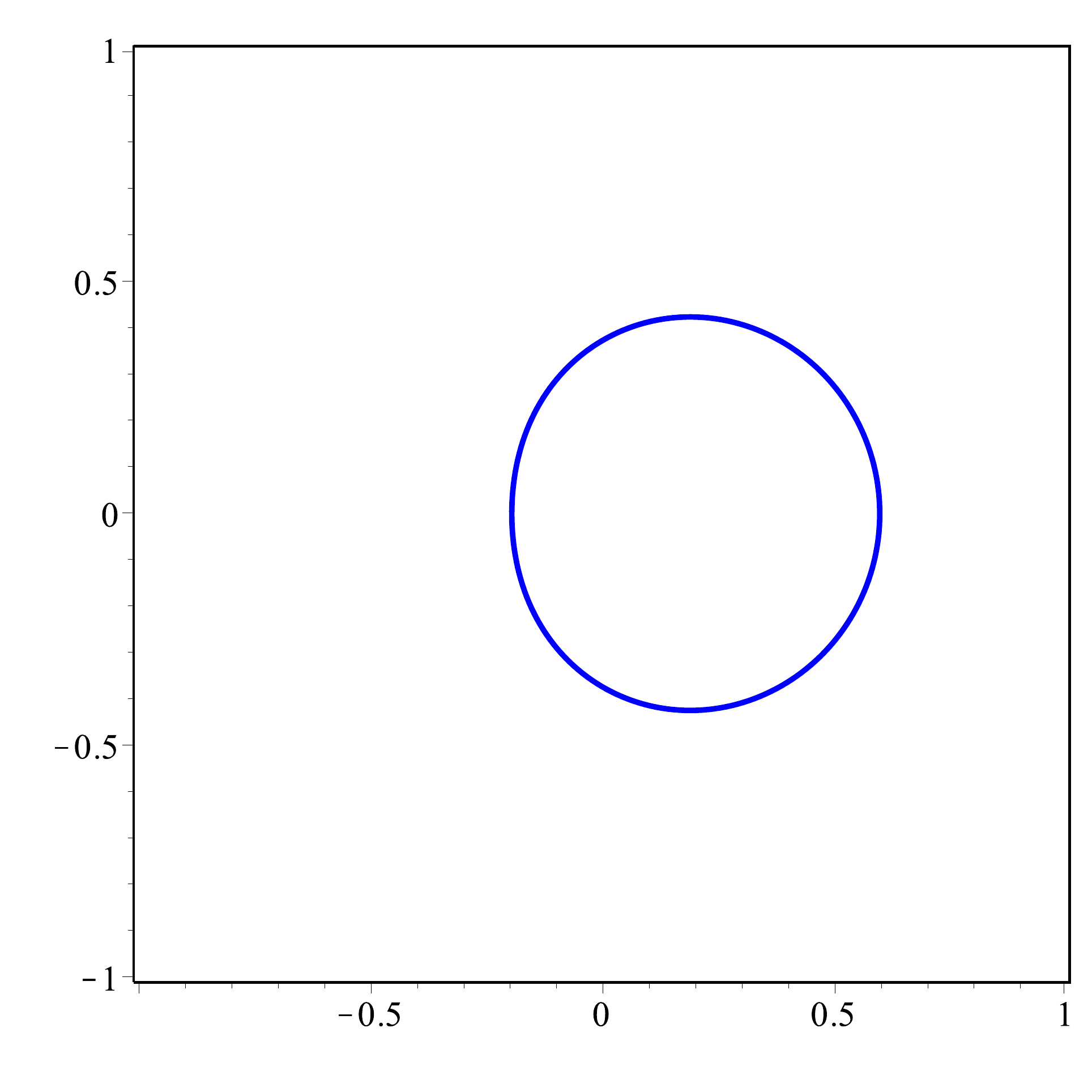}\tvalue{-0.1}\includegraphics[width=0.24\textwidth]{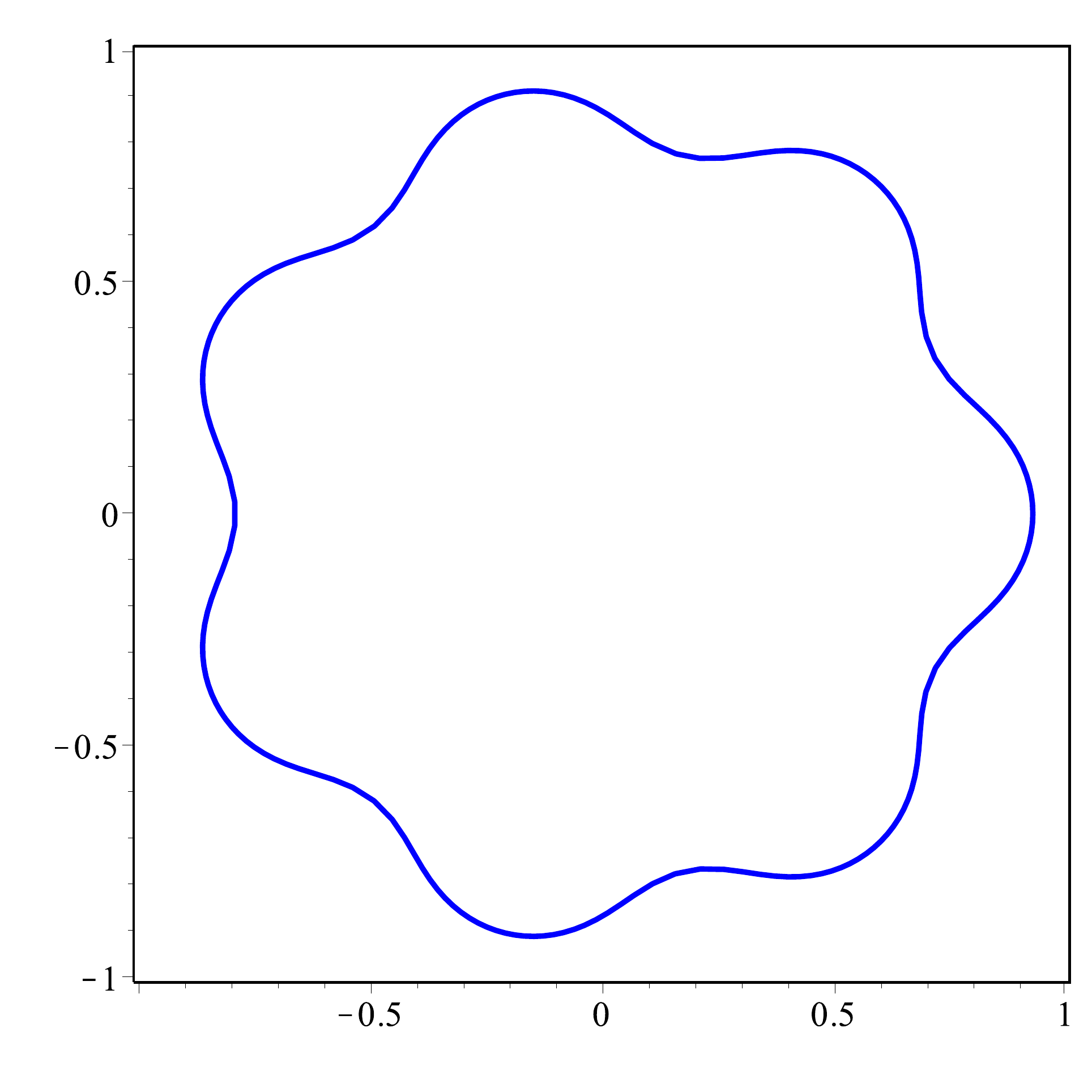}\\
\includegraphics[width=0.24\textwidth]{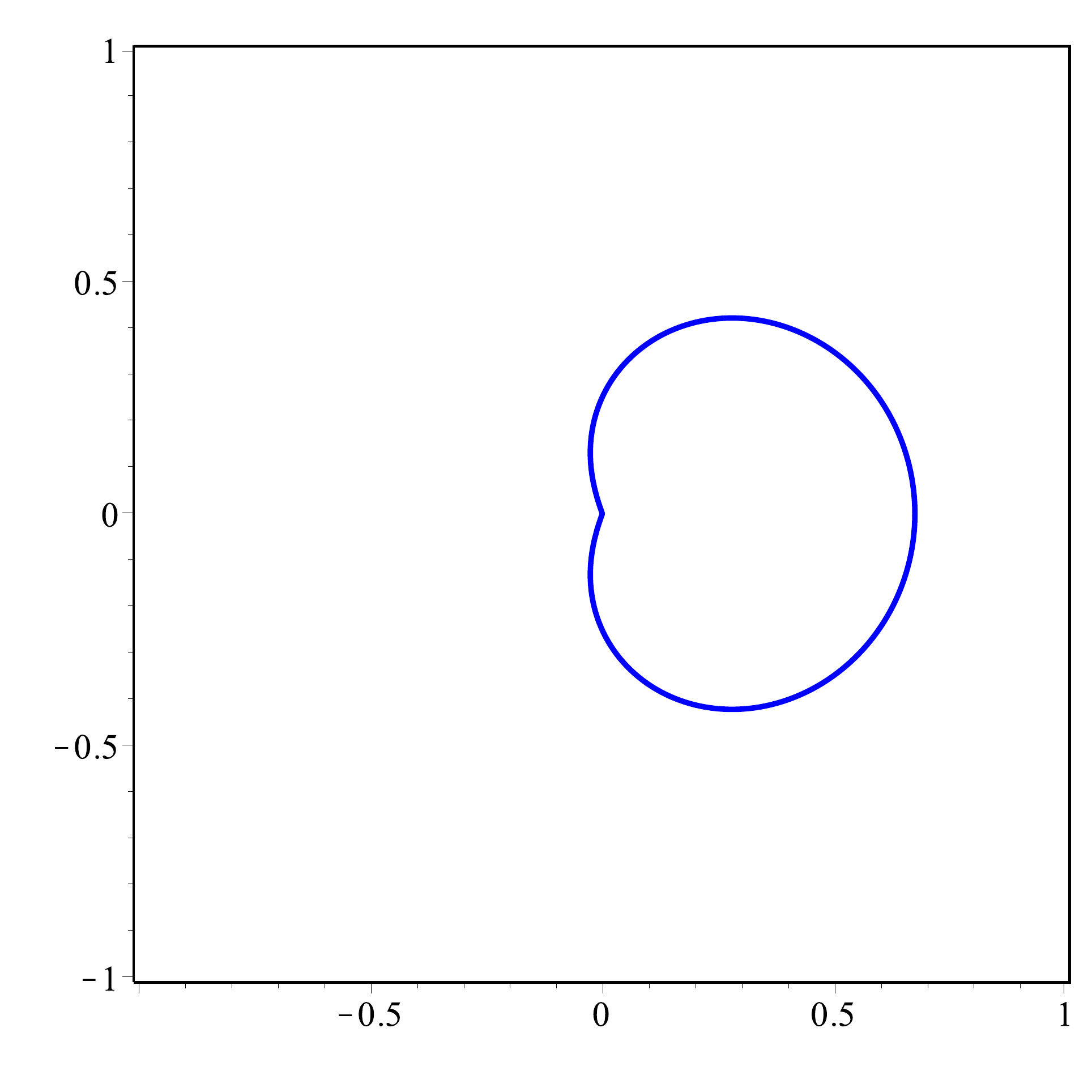}\tvalue{}\includegraphics[width=0.24\textwidth]{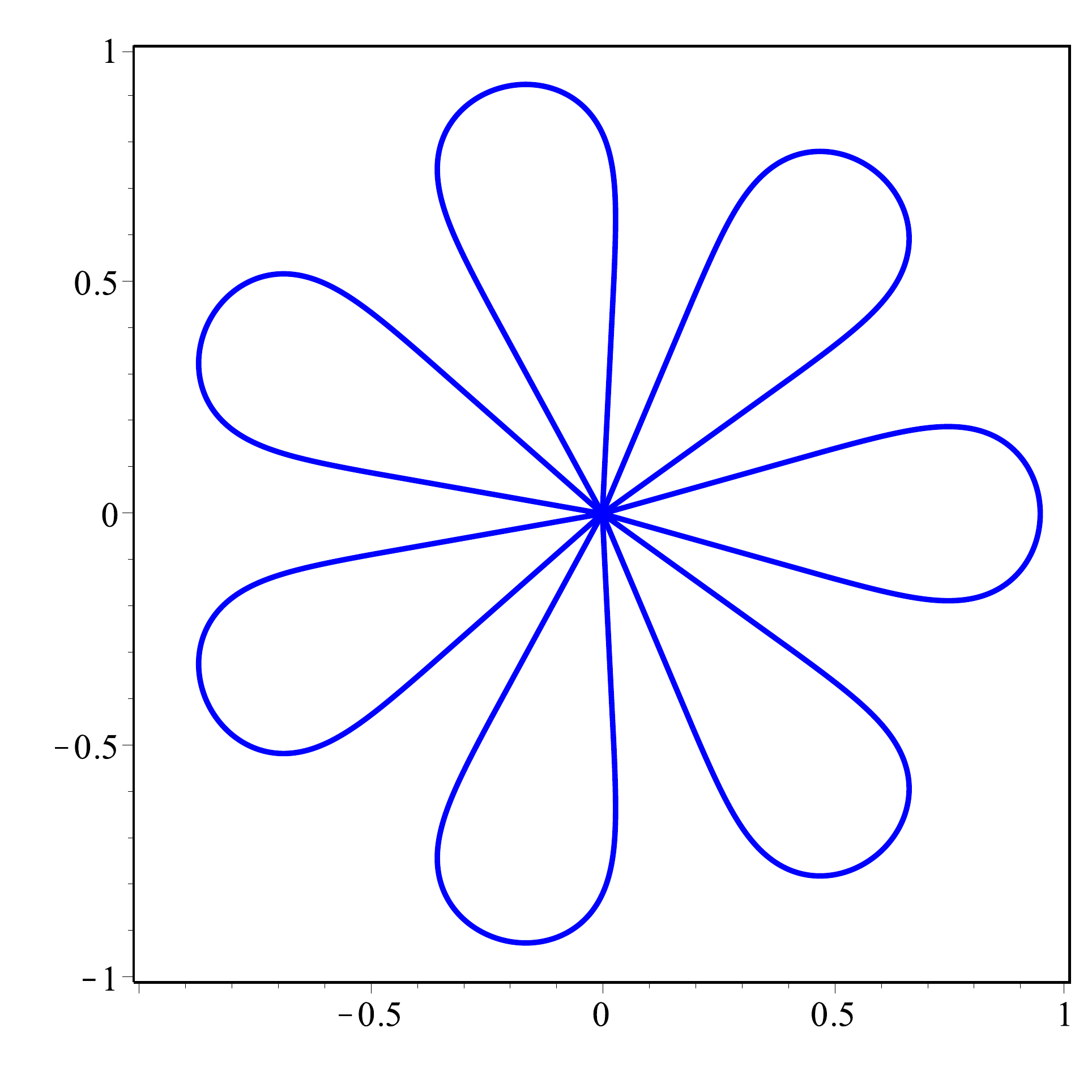}\\
\includegraphics[width=0.24\textwidth]{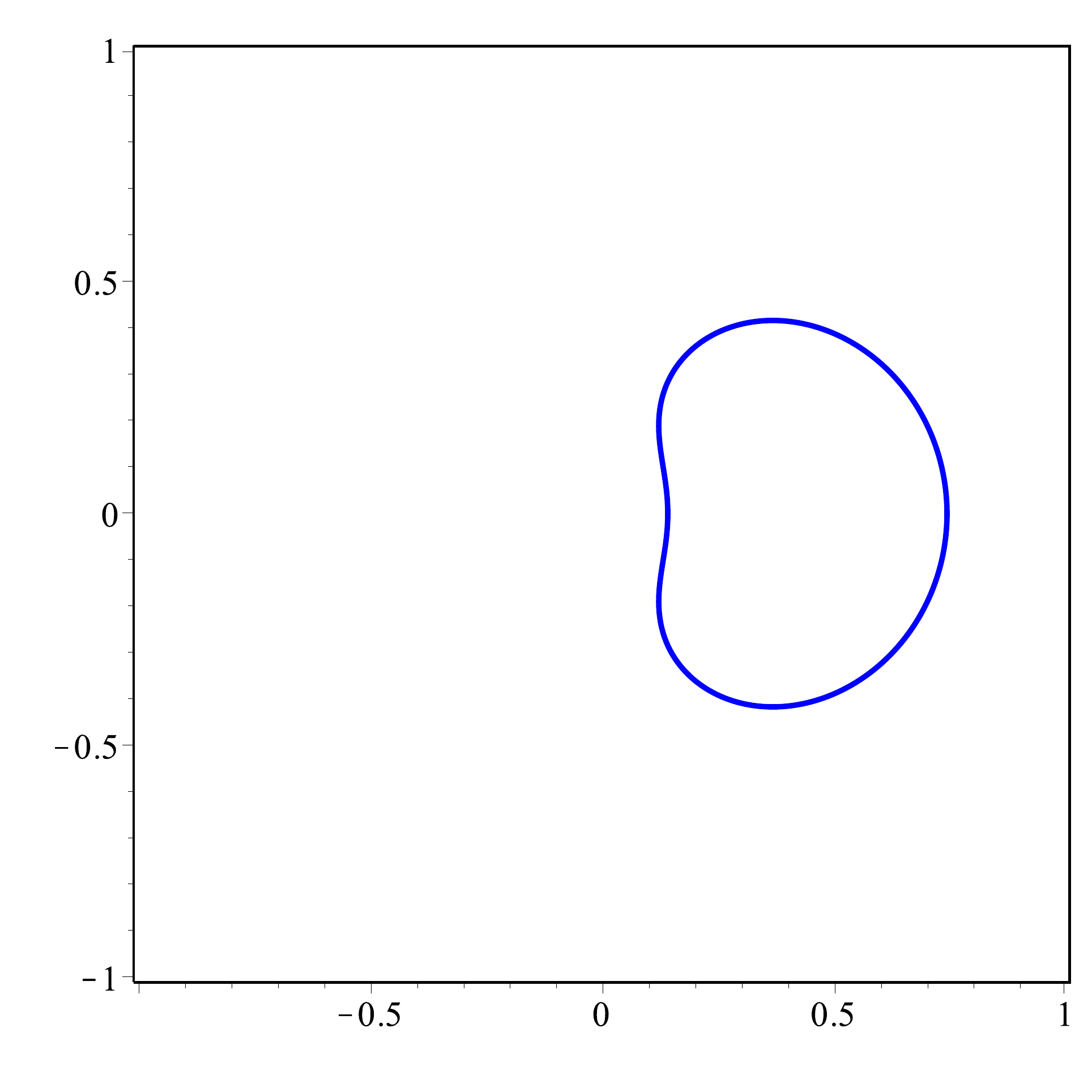}\tvalue{+0.1}\includegraphics[width=0.24\textwidth]{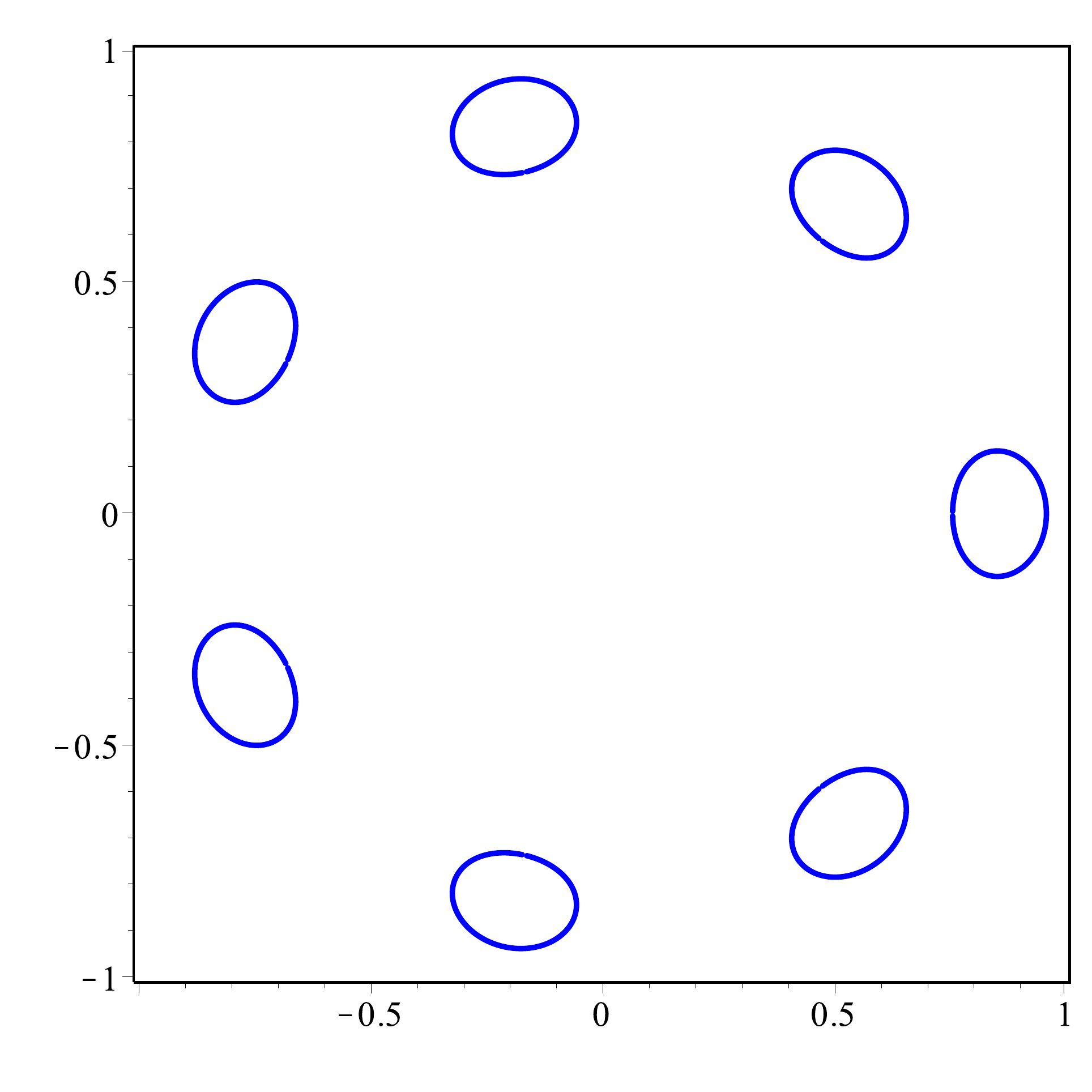}\\
\includegraphics[width=0.24\textwidth]{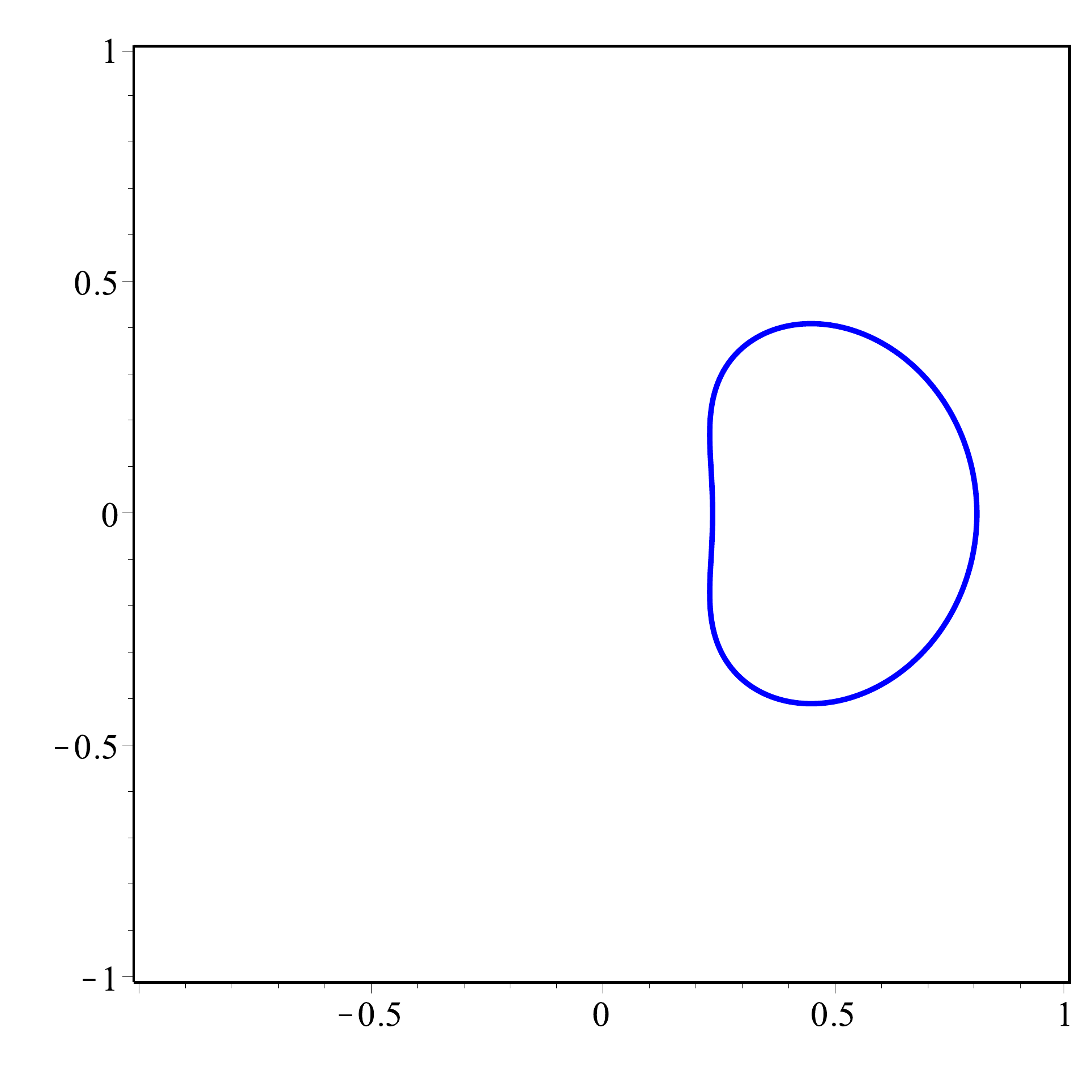}\tvalue{+0.2}\includegraphics[width=0.24\textwidth]{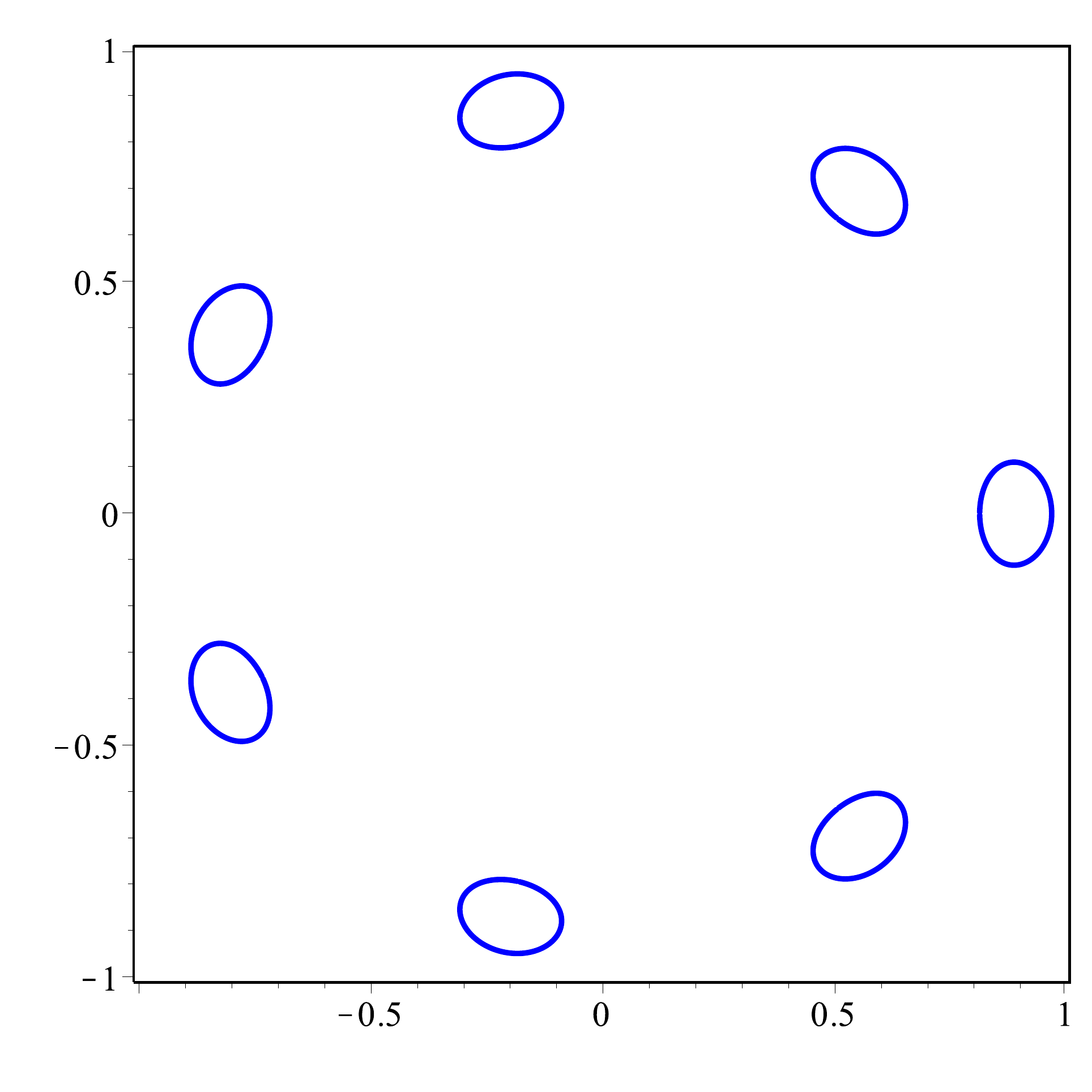}\\
{\bf Figure 3.} The equilibrium supports for $n=9$, $d=7$ ($Q(z)$ left, $V(z)$ right)
\end{center}
\newpage
\begin{remark}
It is easy to check that the parameters $r=r(t)$ and $\alpha=\alpha(t)$ are continuous in $t$. Hence the conformal map, as a function of $t$, is also continuous:
\be
\lim_{|t|\to t_{cr}+}f(u) =r_{cr}\left(u-\frac{1}{\overline{\alpha_{cr}}}\right)\left(1-\frac{\alpha_{cr}}{u}\right)^{\frac{d}{n}-1}= r_{cr}u\left(1-\frac{\alpha_{cr}}{u}\right)^{\frac{d}{n}}=\lim_{|t|\to t_{cr}-}f(u)\ .
\ee
For $|t|< t_{cr}$ the point $z=0$ belongs to the support of the equilibrium measure and at $|t|=t_{cr}$ the origin is on the boundary of the support. 
\end{remark}
A straightforward application of Lemma \ref{lemma:z_d_eq} to the potential $Q(z)$ in Theorem \ref{thm:main} yields the
\begin{shaded}
\begin{corollary} Let $0<d<n$ or $n<d<2n$.
The equilibrium measure for the potential
\be
V(z)=\frac{1}{T}\left(|z|^{2n}-tz^{d}-\bar{t}\bar{z}^{d}\right)
\ee
is given by $\mu_{Q}^{(d)}$, the $d$-fold rotated equilibrium measure of $Q$ given above.

More precisely, $\mu_V$ is absolutely continuous with respect to the area measure with density
\be
d\mu_V(z) = \frac{n^2}{\pi T}|z|^{2n-2}\chi_{C}(z)dA(z)\ ,
\ee
and the support set $C$ of $\mu_V$ can be described as follows:
\begin{itemize}
\item For $|t|\leq t_{cr}$, $C$ is simply connected and it is given by the exterior conformal map
\be
g(u) = \left(f(u^d)\right)^{\frac{1}{d}} = r^{\frac{1}{d}}u\left(1-\frac{\alpha}{u^d}\right)^{\frac{1}{n}}\ ,
\ee
where $r$ and $\alpha$ are given in Theorem \ref{thm:main}.
\item For $|t|>t_{cr}$, $C$ has $d$ disjoint simply connected components and their boundary is parametrized by
\be
g(u) = r^{\frac{1}{d}}u\left(1-\frac{1}{\bar{\alpha}u^d}\right)^{\frac{1}{d}}\left(1-\frac{\alpha}{u^d}\right)^{\frac{1}{n}-\frac{1}{d}}\ ,
\ee
where $u$ lies on the unit circle and  the factor
\be
\left(1-\frac{1}{\bar{\alpha}u^d}\right)^{\frac{1}{d}} \sim 1 +{\mathcal O}\left(\frac{1}{u}\right) \quad u \to \infty
\ee
is defined with some suitably defined branch cuts on $|u|\geq 1$, with $r$ and $\alpha$ given in Theorem \ref{thm:main}.
\end{itemize}
\end{corollary}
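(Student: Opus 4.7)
The starting point is the identity $V(z) = \frac{1}{d}Q(z^d)$, which is immediate from $|z^d|^{2n/d} = |z|^{2n}$. Lemma \ref{lemma:z_d_eq} then gives $\mu_V = \mu_Q^{(d)}$, so the structural content of the corollary is already encoded in Theorem \ref{thm:main} once we understand how the support $K = \supp(\mu_Q)$ pulls back under $z \mapsto z^d$. The density formula comes from the variational equation: differentiating \eqref{eq:var_eq} on the interior of $\supp(\mu_V)$ yields $d\mu_V = \frac{1}{4\pi}\Delta V \cdot \chi_{C}\, dA$, and since the harmonic terms $tz^d + \bar{t}\bar{z}^d$ drop out under $\Delta$, one obtains $\Delta V(z) = \frac{4n^2}{T}|z|^{2n-2}$ as claimed. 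Absolute continuity of $\mu_V$ is inherited from $\mu_Q$ via the rotated-measure construction.

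For the pre-critical regime $|t| \leq t_{cr}$, $K$ is simply connected and contains $z = 0$ (as recorded in the remark following Theorem \ref{thm:main}), and its exterior conformal map $f(u) = ru(1-\alpha/u)^{d/n}$ is of the form $rug(u)$ with $g$ nonvanishing on $\{|u|>1\}$ and $g(\infty) = 1$. This is exactly the hypothesis of the remark on $d$-fold rotated conformal maps in the excerpt, which produces the univalent, correctly normalized exterior uniformizing map $u \mapsto f(u^d)^{1/d} = r^{1/d}u(1-\alpha/u^d)^{1/n}$ of $C$; univalence of this composition forces $C$ to be simply connected.

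In the post-critical regime $|t| > t_{cr}$, a short computation from the explicit formulas shows $|\alpha| < 1$ (with equality at the critical value), so the zero $u_0 = 1/\bar{\alpha}$ of $f$ lies in $\{|u|>1\}$; hence $0 \in \C \setminus K$, and $K$ is a simply connected compact subset of $\C \setminus \{0\}$. Since $z \mapsto z^d$ is an unbranched $d$-fold covering of $\C \setminus \{0\}$, covering-space theory forces $C = \{z : z^d \in K\}$ to split as $d$ disjoint homeomorphic copies of $K$, cyclically permuted by the rotation group of order $d$, each simply connected. To parametrize one of the boundary curves I would factor
\be
f(u^d) = r u^d \left(1 - \frac{1}{\bar{\alpha}u^d}\right)\left(1 - \frac{\alpha}{u^d}\right)^{\frac{d}{n} - 1}
\ee
and extract a $d$-th root, arriving at the displayed expression for $g(u)$.

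The main obstacle is precisely this branch choice. Since $f(u^d)$ acquires $d$ zeros on the circle $|u| = |\alpha|^{-1/d} > 1$, the function $f(u^d)^{1/d}$ cannot be defined globally on $\{|u|>1\}$ as a single-valued analytic map, and the formula is meaningful only after introducing suitable radial branch cuts from those zeros out to infinity, or, restricting attention to $|u|=1$, as an unambiguous complex-valued parametrization. One must then verify that, with the normalization $(1-1/(\bar{\alpha}u^d))^{1/d} = 1 + \mathcal{O}(1/u)$ as $u \to \infty$ fixed in the statement, the image of the unit circle is a simple Jordan curve bounding one component of $C$; this reduces to the univalence of $f$ on a single arc of length $2\pi/d$ of the unit circle, together with the fact that the rotational symmetry confines each lift to a distinct sector of opening angle $2\pi/d$.
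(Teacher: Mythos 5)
Your argument follows exactly the route the paper intends for this corollary: the identity $V(z)=\tfrac{1}{d}Q(z^{d})$ together with Lemma \ref{lemma:z_d_eq} gives $\mu_V=\mu_Q^{(d)}$, the density and the pre-critical map $r^{1/d}u(1-\alpha/u^{d})^{1/n}$ come from the remarks on rotated measures and rotated conformal maps, and the post-critical factorization $f(u^{d})=ru^{d}\bigl(1-\tfrac{1}{\bar{\alpha}u^{d}}\bigr)\bigl(1-\tfrac{\alpha}{u^{d}}\bigr)^{\frac{d}{n}-1}$ is precisely the computation behind the stated parametrization, the paper itself offering no more than ``a straightforward application'' of the lemma. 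Your added covering-space argument for the $d$ disjoint components and your discussion of where branch cuts must be placed (matching the corollary's own ``suitably defined branch cuts'' caveat, with injectivity on the relevant arc reducing to the injectivity of $f$ on $|u|=1$ proved in Appendix \ref{sec:univalency}) only make explicit what the paper leaves implicit, so the proposal is correct and essentially the same proof.
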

\end{shaded}

Note that the equilibrium problem for the class of potentials considered in the present paper was studied in \cite{Etingof_Ma} using conformal mapping techniques. Their method is based on a singularity correspondence result originating in the works of Richardson \cite{Richardson} and Gustafsson \cite{Gustafsson} and developed further by Entov and Etingof in \cite{Entov_Etingof}. Etingof and Ma obtained the functional form of the exterior conformal map for small values of $|t|$ only relying on a pertubative approach. 

As an original contribution, we find the critical value of the parameter $t$ where the pre-critical form of the conformal map \eqref{eq:conformal_map} breaks down and we obtain the post-critical form of $f(u)$ above. We prove explicitly that the variational inequalities \eqref{eq:var_eq} and \eqref{eq:var_ineq} hold for all admissible values of the parameters for the reduced potentials \eqref{ex_pot_reduced}. The analogous results for the class of potentials \eqref{ex_pot} are obtained by applying the symmetry reduction described above.

The paper is organized as follows: in Section \ref{sec:z_d} the necessary preliminaries on discrete rotational symmetries and the proof of Lemma \ref{lemma:z_d_eq} are given. In Section \ref{sec:entov_etingof} we give a short survey of the Entov-Etingof singularity correspondence that we need to construct the conformal maps. In the following sections we develop the proof of Theorem \ref{thm:main}: in Section \ref{sec:conf_map} we find the structure of the parametrizing conformal maps, in Sections \ref{sec:eq_pre} and \ref{sec:eq_post} the critical value $t_{cr}$ is calculated and the equations for the parameters of the pre- and post-critical conformal map are given. In Section \ref{sec:proof} we conclude the proof of the theorem showing that the variational inequalities \eqref{eq:var_eq} and \eqref{eq:var_ineq} hold.


\section{Discrete rotational symmetries}
\label{sec:z_d}

The \emph{logarithmic potential} and the \emph{Cauchy transform} of a measure $\mu$ are defined as
\be
\label{udef}
U^{\mu}(z) = \int\log\frac{1}{|z-w|}d\mu(w)\ ,
\ee
and
\be
C_{\mu}(z) = \int\frac{d\mu(w)}{w-z}\ ,
\ee
respectively. 

\begin{proposition}
The $d$-fold rotated measure $\mu^{(d)}$ of the measure $\mu$ has the logarithmic potential
\be
U^{\mu^{(d)}}(z) = \frac{1}{d}U^{\mu}(z^{d})
\ee
and the Cauchy transform
\be
C_{\mu^{(d)}}(z) = z^{d-1}C_{\mu}(z^d)\ .
\ee
\end{proposition}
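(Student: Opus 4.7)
The plan is to convert integrals against $\mu^{(d)}$ into integrals against $\mu$ via the defining change of variables, and then use the elementary factorization $z^d-\zeta=\prod_{k=0}^{d-1}(z-\varphi_k(\zeta))$ to collapse the resulting sums over sectors.

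First I would record the change-of-variables formula that is built into the definition. Since $\varphi_k\colon\C\to S_k$ is a bijection (up to the origin, which is $\mu$-negligible for any measure of finite energy), the piece $\mu^{(d)}_k$ is precisely the push-forward $(\varphi_k)_*\mu$, so for any Borel function $F$ for which the integrals make sense,
\be
\int F(w)\,d\mu^{(d)}(w)=\frac{1}{d}\sum_{k=0}^{d-1}\int F(\varphi_k(\zeta))\,d\mu(\zeta)\ .
\ee
The crucial observation is that for every $\zeta\in\C$ the $d$ points $\varphi_0(\zeta),\ldots,\varphi_{d-1}(\zeta)$ are exactly the $d$-th roots of $\zeta$, because $\varphi_k(\zeta)^d=\zeta$ and the arguments $\tfrac{1}{d}\arg\zeta+\tfrac{2\pi k}{d}$ are distinct modulo $2\pi$.

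Applying this with $F(w)=\log\frac{1}{|z-w|}$ gives
\be
U^{\mu^{(d)}}(z)=\frac{1}{d}\int\sum_{k=0}^{d-1}\log\frac{1}{|z-\varphi_k(\zeta)|}\,d\mu(\zeta)\ .
\ee
Since $\prod_{k=0}^{d-1}(z-\varphi_k(\zeta))=z^d-\zeta$, the inner sum equals $\log\frac{1}{|z^d-\zeta|}$, which is $U^{\mu}(z^d)$ after integrating, proving the first identity.

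For the Cauchy transform I would differentiate the same factorization. Taking $\frac{d}{dz}\log$ of $z^d-\zeta=\prod_k(z-\varphi_k(\zeta))$ yields the partial-fraction identity
\be
\sum_{k=0}^{d-1}\frac{1}{z-\varphi_k(\zeta)}=\frac{d\,z^{d-1}}{z^d-\zeta}\ ,
\ee
hence $\sum_{k}\frac{1}{\varphi_k(\zeta)-z}=\frac{d\,z^{d-1}}{\zeta-z^d}$. Applying $F(w)=(w-z)^{-1}$ in the change-of-variables identity gives
\be
C_{\mu^{(d)}}(z)=\frac{1}{d}\int\frac{d\,z^{d-1}}{\zeta-z^d}\,d\mu(\zeta)=z^{d-1}\int\frac{d\mu(\zeta)}{\zeta-z^d}=z^{d-1}C_{\mu}(z^d)\ ,
\ee
as claimed. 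No step is really a serious obstacle; the only point needing a remark is that for the values of $z$ on which the Cauchy integral is singular (namely $z^d\in\supp\mu$, equivalently $z\in\supp\mu^{(d)}$) the identities are understood in the principal value sense, and the exchange of sum and integral is justified by Fubini since each term is $\mu$-integrable exactly when $C_\mu$ (resp.\ $U^\mu$) is defined at $z^d$.
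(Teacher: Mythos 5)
Your proposal is correct and follows essentially the same route as the paper: rewrite integrals against $\mu^{(d)}$ as a sum of push-forward integrals against $\mu$ via the maps $\varphi_k$, and collapse the sum using the factorization $\prod_{k=0}^{d-1}(z-\varphi_k(\zeta))=z^d-\zeta$, taking $\log$ of the moduli for the potential and the logarithmic derivative (partial fractions) for the Cauchy transform, which is exactly the ``similar'' argument the paper leaves implicit.
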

\begin{proof}
\begin{align}
U^{\mu^{(d)}}(z) &= \frac{1}{d}\sum_{k=0}^{d-1}\int_{S_k}\log\frac{1}{|z-w|}d\mu^{(d)}_k(w)\\
&= \frac{1}{d}\sum_{k=0}^{d-1}\int_{\C}\log\frac{1}{|z-\varphi_k(u)|}d\mu(u)\ .
\end{align}
Since
\be
\prod_{k=0}^{d-1}(z-\varphi_k(u)) = z^d-u\ ,
\ee
We have that
\begin{equation}
U^{\mu^{(d)}}(z) = \frac{1}{d}\int_{\C}\log\frac{1}{|z^d-u|}d\mu(u) = \frac{1}{d}U^{\mu}(z^d)\ .
\ee
The equation involving the Cauchy transforms can be proved similarly.
\end{proof}

\begin{proof}[Proof of Lemma \ref{lemma:z_d_eq}]
Let $\mu_Q$ be the equilibrium measure of the admissible potential $Q(z)$, which is uniquely characterized by the following variational inequalities for some constant $F$:
\begin{align}
&Q(z) + 2 U^{\mu_Q}(z) = F \quad z \in \supp(\mu_Q) \quad \mbox{ q.e. }\\
&Q(z) + 2 U^{\mu_Q}(z) \geq F \quad z \not \in \supp(\mu_Q)\quad \mbox{ q.e. }
\end{align}
Therefore
\begin{align}
&Q(z^d) + 2 U^{\mu_Q}(z^d) = F \quad z^d \in \supp(\mu_Q)\quad \mbox{ q.e. }\\
&Q(z^d) + 2 U^{\mu_Q}(z^d) \geq F \quad z^d \not \in \supp(\mu_Q)\quad \mbox{ q.e. }
\end{align}
Using the previous proposition with $\mu=\mu_Q^{(d)}$, we find that
\begin{align}
&V(z) + 2 U^{\mu_{Q}^{(d)}}(z) = \frac{F}{d} \quad z \in \supp(\mu_Q^{(d)})\quad \mbox{ q.e. }\\
&V(z) + 2 U^{\mu_{Q}^{(d)}}(z) \geq \frac{F}{d} \quad z \not \in \supp(\mu_Q^{(d)})\quad \mbox{ q.e. }\ ,
\end{align}
where
\be
Q(z) = \frac{1}{d}V(z^{d})\ .
\ee
This is enough to prove that $\mu_V = \mu_Q^{(d)}$.
\end{proof}

\section{Singularity correspondence}
\label{sec:entov_etingof}

We briefly recall the method developed by Entov and Etingof in \cite{Entov_Etingof} based on the singularity correspondence of Richardson \cite{Richardson} and Gustafsson \cite{Gustafsson} to find the functional form of the exterior uniformizing map of the support of the equilibrium measure.

To this end, consider a polynomial perturbation of a radially symmetric potential of the form
\be
\label{vdef}
V(z) = W(z\bar{z})-P(z)-\overline{P(z)}\ ,
\ee
where $W(x)$ is a real-valued twice continuously differentiable function on $(0, \infty)$ and $P(z)$ is a polynomial of degree $m$ such that the potential $V(z)$ is admissible.
We seek the equilibrium measure in the form
\be
d\mu(z)=\frac{1}{\pi}\chi_K(z)\rho(z\bar{z})dA(z)\ ,
\ee
where 
\be
\rho(z\bar{z})=\frac{1}{4}\Delta W(z\bar{z}) = W''(z\bar{z})z\bar{z}+W'(z\bar{z})\ ,
\ee
and we assume that $K$ is simply connected and $\partial K$ is a Jordan curve with interior and exterior $D_{+}$ and $D_{-}$ respectively.
For any $z\in D_{+}$ we have
\be
\label{eq:stokes}
W'(z\bar{z})\bar{z}=\frac{1}{2\pi i}\int_{\partial K}\frac{W'(w\bar{w})\bar{w}dw}{w-z}+\frac{1}{2\pi i} \int_{K}\frac{\rho(w\bar{w})dw \wedge d\bar{w}}{w-z}
\ee
by Stokes' Theorem. Consider the Sokhotsky-Plemelj decomposition
\be
\label{eq:sok_ple}
W'(z\bar{z})\bar{z} = \varphi_{+}(z)-\varphi_{-}(z) \qquad z \in \partial K\ ,
\ee
with
\be
\varphi_{\pm}(z) = \frac{1}{2\pi i}\int_{\partial K}\frac{W'(w\bar{w})\bar{w}dw}{w-z} \quad z \in D_{\pm}
\ee
holomorphic in $D_{\pm}$ respectively and $\varphi_{-}(z) \to 0$  for $z \to \infty$.
From \eqref{eq:stokes},
\be
\label{split1}
\frac{1}{\pi}\int_{K}\frac{\rho(w\bar{w})dA(w)}{w-z} = 
\left\{\begin{array}{ll}
\varphi_{+}(z)-W'(z\bar{z})\bar{z} & z \in D_{+}\\
\varphi_{-}(z) & z \in D_{-}\ .
\end{array}
\right.
\ee
By the variational equality \eqref{eq:var_ineq} the effective potential
\be
E(z) = V(z)+2U_{\mu}(z)
\ee
is constant on $K$ and therefore its $\partial_{z}$-derivative gives, by \eqref{udef} and \eqref{split1},
\be
\label{pprime}
P'(z) = \varphi_{+}(z)\ .
\ee
This equation determines the shape of $K$ through its external conformal map, as shown below.

In what follows, we assume that the uniformizing map $z=f(u)$ extends continuously to the boundary $|u|=1$.
The Sokhotsky-Plemelj decomposition \eqref{eq:sok_ple} implies the identity
\be
\varphi_{+}(f(u))-\varphi_{-}(f(u)) = W'\left(f(u)\overline{f(u)}\right)\overline{f(u)} \qquad |u|=1
\ee
on the unit circle of the $u$-plane. Given that
\be
\overline{f(u)} = \bar{f}\left(\frac{1}{u}\right) \quad |u|=1\ ,
\ee
multiplying both sides by $f(u)$ and rearranging gives that
\be
\label{eq:entov_etingof}
f(u)\varphi_{+}(f(u))-H(u) = f(u)\varphi_{-}(f(u)) \qquad |u|=1\ ,
\ee
where
\be
\label{eq:hu}
H(u)= W'\left(f(u)\bar{f}\left(\frac{1}{u}\right)\right)f(u)\bar{f}\left(\frac{1}{u}\right)\ .
\ee
The main idea of the singularity correspondence is that since $f(u)\varphi_{-}(f(u))$ is holomorphic outside the unit circle $|u|=1$, the l.h.s of \eqref{eq:entov_etingof}, seen as a function on the unit circle, possesses an analytic continuation outside the unit disk of the $u$-plane.
Therefore the analytic continuations of the functions $f(u)\varphi_{+}(f(u))$ and $H(u)$ have the same singularities in the exterior of the unit disk on the $u$-plane.

Since $f(u)$ is a univalent uniformizing map, the singularities of $z\varphi_{+}(z)$ in $D_{-}$ are in one-to-one correspondence with the singularities of $f(u)\varphi_{+}(f(u))$ in $|u|>1$ through the mapping $f$. This establishes a $1-1$ correspondence between the singularities of $z\varphi_{+}(z)$ in $D_{-}$ and $H(u)$ in $|u|>1$.  In particular, $H(u)$ has a pole of order $k$ at $u = u_0$ in $|u|>1$ if and only if $z\varphi_{+}(z)$ has a pole of order $k$ at $z_0 = f(u_0)$.

Since $z\varphi_{+}(z) = zP'(z)$ is a polynomial of degree $m$, it only has a pole of order $m$ at $z=\infty$, and hence the only singularity of $H(u)$ in $|u|>1$ is a pole of order $m$ at $u =\infty$.
Since
\be
\overline{H(u)} = \bar{H}\left(\frac{1}{u}\right)=H(u) \qquad |u|=1\ ,
\ee
$H(u)$ is real on the unit circle and it can be analytically continued using the Schwarz reflection principle:
\be
\tilde H(u)
=\left\{\begin{array}{ll}
\displaystyle H(u) & |u|>1\\
\displaystyle \bar{H}\left(\frac{1}{u}\right) & |u|<1\ .
\end{array}
\right.
\ee
Therefore $\tilde H(u)$ is a Laurent polynomial of degree $m$ which is real-valued and positive on the unit circle. By the Fej\'er-Riesz Lemma, there exists a polynomial $R(u)$ of degree $m$ with zeroes inside the unit disk such that
\be
\label{hr}
H(u) = R(u)\bar{R}\left(\frac{1}{u}\right)\ .
\ee

\section{The conformal map}
\label{sec:conf_map}
Now we specialize to the class of potentials \eqref{ex_pot_reduced}, where
\be
W(x) = \frac{d}{T}x^{\frac{n}{d}}\quad \mbox{and}\quad P(z) = \frac{td}{T}z\ .
\ee
It is important to distinguish between two different regimes corresponding to small values of $|t|$ and large values of $|t|$. If $|t|$ is small enough we expect that $z=0$ belongs to the support $K$ since the equilibrium measure for $t=0$ is supported on a disk centered at the origin. For large $|t|$, however, the constant field coming from the perturbative term $tz+\bar{t}\bar{z}$ becomes strong enough so that the equilibrium domain separates from the origin. We expect a critical transition when $z \in \partial K$  that will affect the functional form of the conformal map. Hence the cases $0 \in K$ and $0 \in \C\setminus K$ are referred to as \emph{pre-critical} and \emph{post-critical}, respectively.

\subsubsection*{Pre-critical case}
Following \cite{Etingof_Ma}, we seek the conformal map in the form
\be
f(u) = ru g(u)\ ,
\ee
where $g$ is holomorphic in $|u|>1$ with
\be
g(u) = 1+{\mathcal O}\left(\frac{1}{u}\right) \quad u \to \infty\ ,
\ee
and
\be
g(u)\not=0 \qquad |u|>1
\ee
since $z=0$ is assumed to belong to $K$. The singularity correspondence \eqref{hr} implies that
\be
\label{eq:e_e}
\frac{n}{T}\left[f(u)\bar{f}\left(\frac{1}{u}\right)\right]^{\frac{n}{d}} = R(u)\bar{R}\left(\frac{1}{u}\right)
\ee
for some linear polynomial $R(u)$ and therefore
\be
\frac{n r^{\frac{2n}{d}}}{T}\left[g(u)\bar{g}\left(\frac{1}{u}\right)\right]^{\frac{n}{d}}=R(u)\bar{R}\left(\frac{1}{u}\right)\ .
\ee
The polynomial $R$ is parametrized as
\be
R(u) = r^{\frac{n}{d}}\sqrt{\frac{n}{T}} (u-\alpha)\ ,
\ee
where $|\alpha| \leq 1$ is an unknown parameter, to be determined from the data. Therefore $g$ is of the form
\be
g(u) = \left(1-\frac{\alpha}{u}\right)^{\frac{d}{n}}\ ,
\ee
and hence
\begin{shaded}
\be
\label{eq:c_map_pre}
f(u) = ru\left(1-\frac{\alpha}{u}\right)^{\frac{d}{n}}\ .
\ee
\end{shaded}
To find the equlibrium support we have to determine the correct values of the parameters $r$ and $\alpha$.

\subsubsection*{Post-critical case} If the $z=0$ is outside $K$ then $f(u)$ must have a zero in $|u|>1$. To make sense of \eqref{eq:e_e}, the ansatz presented in \cite{Etingof_Ma} needs to be modified
to account for the vanishing of the conformal map at, say, $u=\beta$. This can be done by introducing a \emph{Blaschke factor}, i.e., 
\be
u\mapsto \frac{u-\beta}{1-\bar{\beta}u}\qquad |\beta|>1\ ,
\ee
that leaves the unit circle of the $u$-plane invariant and swaps the interior and the exterior. This function can be factored into the proposed form of the conformal map:
\be
f(u) = ru \left(-\bar{\beta}\right)\frac{u-\beta}{1-\bar{\beta}u}g(u)
\ee
where $g$ is holomorphic in $|u|>1$ with
\be
g(u)\not=0 \qquad |u|>1\ ,\qquad g(u) = 1+{\mathcal O}\left(\frac{1}{u}\right) \quad u \to \infty\ .
\ee
Then
\be
f(u)\bar{f}\left(\frac{1}{u}\right)=r^{2}|\beta|^{2}\frac{u-\beta}{1-\bar{\beta}u}g(u)\frac{1-\bar{\beta}u}{u-\beta}\bar{g}\left(\frac{1}{u}\right)=r^{2}|\beta|^{2}g(u)\bar{g}\left(\frac{1}{u}\right)
\ee
and therefore
\be
\frac{n r^{\frac{2n}{d}}|\beta|^{\frac{2n}{d}}}{T}\left[g(u)\bar{g}\left(\frac{1}{u}\right)\right]^{\frac{n}{d}}=R(u)\bar{R}\left(\frac{1}{u}\right)\ .
\ee
Now
\be
R(u) = r\beta\left(\frac{n}{T}\right)^{\frac{d}{2n}}(u-\alpha)\ ,
\ee
where $|\alpha| <1$ is unknown. The function $g$ is again
\be
g(u) = \left(1-\frac{\alpha}{u}\right)^{\frac{d}{n}}\ ,
\ee
and hence
\begin{shaded}
\be
\label{eq:c_map_post}
f(u) = ru\frac{u-\beta}{u-\frac{1}{\bar{\beta}}}\left(1-\frac{\alpha}{u}\right)^{\frac{d}{n}}\ ,
\ee
\end{shaded}
\noindent
where the parameters $r, \alpha$ and $\beta$ are to be determined from the data.
\begin{remark}
It is to be checked that these functions are univalent maps outside the unit circle: for this we refer to Appendix \ref{sec:univalency}.
\end{remark}

\section{Equations for the parameters in the pre-critical case}
\label{sec:eq_pre}
In this section we take a conformal map of the form \eqref{eq:c_map_pre} as an ansatz and derive equations for the parameters $r$ and $\alpha$.
The analysis of these equations allows to find the critical value $|t|=t_{cr}$ for which the condition $|\alpha|<1$ is no longer satisfied; it will be shown that the value $|t|=t_{cr}$ separates the pre- and post-critical cases.

\noindent
{\bf Total mass condition.} 

\begin{align}
\frac{1}{\pi}\int_{K}\rho(w\bar{w})dA(w) &= \frac{n}{T}\frac{1}{2\pi i}\int_{\partial K}w^{\frac{n}{d}}\bar{w}^{\frac{n}{d}}\frac{dw}{w}\\
&= \frac{n}{T}\frac{1}{2\pi i}\int_{|u|=1}\left[f(u)\bar{f}\left(\frac{1}{u}\right)\right]^{\frac{n}{d}}\frac{f'(u)}{f(u)}du\\
&= \frac{r^{\frac{2n}{d}}n}{T}\frac{1}{2\pi i}\int_{|u|=1}\left(1-\frac{\alpha}{u}\right)\left(1-\bar{\alpha}u\right)\left[\left(1-\frac{d}{n}\right)\frac{1}{u} +\frac{d}{n}\frac{1}{u-\alpha}\right]du\\
&=\frac{r^{\frac{2n}{d}}}{T}\left(n+(n-d)|\alpha|^{2}\right)\ .
\end{align}
In the calculation above we assumed that $|\alpha|<1$.
In the chosen normalization of the energy problem the equilibrium measure $\mu_Q$ is a probability measure, and this gives the following equation in $r$ and the modulus of $\alpha$:
\be
\label{eq:area_pre}
\frac{r^{\frac{2n}{d}}}{T}\left(n+(n-d)|\alpha|^{2}\right)=1\ .
\ee

\noindent
{\bf Deformation condition.} From \eqref{eq:hu} we can compute
\be
\frac{H(u)}{f(u)}=\frac{nr^{\frac{2n}{d}}}{T}\left(1-\frac{\alpha}{u}\right)\left(1-\bar{\alpha}u\right)\frac{1}{f(u)}
=-\frac{nr^{\frac{2n}{d}-1}\bar{\alpha}}{T} +{\mathcal O}\left(\frac{1}{u}\right) \quad u \to \infty\ .
\ee
Since we know $\varphi_{+}$ from \eqref{pprime}, by using \eqref{eq:entov_etingof} and the previous asympotic behaviour we can obtain
\begin{align}
\label{phi+}
\varphi_{+}(f(u))&=-\frac{nr^{\frac{2n}{d}-1}\bar{\alpha}}{T}\\
\varphi_{-}(f(u))&=-\frac{nr^{\frac{2n}{d}-1}\bar{\alpha}}{T}-\frac{nr^{\frac{2n}{d}}}{T}\left(1-\frac{\alpha}{u}\right)\left(1-\bar{\alpha}u\right)\frac{1}{f(u)}\ .
\end{align}
Comparison of \eqref{phi+} with \eqref{pprime} gives the second equation
\be
\label{eq:def_pre}
t = -\frac{n}{d}r^{\frac{2n}{d}-1}\bar{\alpha}\ .
\ee
Combining \eqref{eq:area_pre} and \eqref{eq:def_pre} gives the equation
\be
\label{eq:r_eq}
r^{\frac{4n}{d}-2}-\frac{T}{n}r^{\frac{2n}{d}-2}+\frac{n-d}{n}\frac{d^2}{n^2}|t|^2=0
\ee
for the conformal radius $r$ as a function of $t$. It can be shown (see Appendix \ref{sec:conf_r}) that \eqref{eq:r_eq} has a unique positive solution $r=r_0$ such that
\be
|\alpha(r_0)|<1\ .
\ee
Equations \eqref{eq:r_eq} and \eqref{eq:def_pre} are what we need to characterize all the parameters in the pre-critical case.

\section{Equations for the parameters for the post-critical case}
\label{sec:eq_post}
For $|t|>t_{cr}$ we take the modified form of the conformal map \eqref{eq:c_map_post} that allows the description of the support that does not contain the point $z=0$. 
As shown below, there are two equations for the three unknowns $r, \alpha$ and $\beta$ which are insufficient to characterize the conformal map. However, the uniqueness of the equilibrium measure implies that there is only one choice of the parameters which corresponds to this unique measure. In the next section to resolve this selection problem we make an ansatz on the value of $\beta$ for which the corresponding measure satisfies not only the total mass and the deformation equations but also the variational inequalities.

\noindent
{\bf Total mass condition.} Now the total mass of the measure is given by
\begin{align}
\frac{1}{\pi}\int_{K}\rho(w\bar{w})dA(w) &= \frac{|\beta|^{\frac{2n}{d}}r^{\frac{2n}{d}}n}{T}\frac{1}{2\pi i}\int_{|u|=1}\left(1-\frac{\alpha}{u}\right)\left(1-\bar{\alpha}u\right)\frac{f'(u)}{f(u)} du\\
&=\frac{n}{T}|\beta|^{\frac{2n}{d}}r^{\frac{2n}{d}}\left(\frac{\bar{\alpha}}{\bar{\beta}}+\frac{\alpha}{\beta}-\frac{d}{n}|\alpha|^{2}\right)\ .
\end{align}
This gives the following equation on $r$ and the modulus of $\alpha$:
\be
\label{eq:area_post}
\frac{n}{T}|\beta|^{\frac{2n}{d}}r^{\frac{2n}{d}}\left(\frac{\bar{\alpha}}{\bar{\beta}}+\frac{\alpha}{\beta}-\frac{d}{n}|\alpha|^{2}\right)=1\ .
\ee
\noindent
{\bf Deformation condition.} Now
\be
\frac{H(u)}{f(u)}=  \frac{n}{T}|\beta|^{\frac{2n}{d}}r^{\frac{2n}{d}}\left(1-\frac{\alpha}{u}\right)\left(1-\bar{\alpha}u\right)\frac{1}{f(u)}=-\frac{n}{T}|\beta|^{\frac{2n}{d}}r^{\frac{2n}{d}-1}\bar{\alpha}+{\mathcal O}\left(\frac{1}{u}\right) \quad u \to \infty\ ,
\ee
and therefore, applying the same comparison as in the pre-critical case we get
\begin{align}
\varphi_{+}(f(u))&=-\frac{n}{T}|\beta|^{\frac{2n}{d}}r^{\frac{2n}{d}-1}\bar{\alpha}\\
\varphi_{-}(f(u))&=-\frac{n}{T}|\beta|^{\frac{2n}{d}}r^{\frac{2n}{d}-1}\bar{\alpha}
-\frac{n}{T}|\beta|^{\frac{2n}{d}}r^{\frac{2n}{d}}\left(1-\frac{\alpha}{u}\right)\left(1-\bar{\alpha}u\right)\frac{1}{f(u)}\ .
\end{align}
This implies, by \eqref{pprime}, that
\be
\label{eq:def_post}
t = -\frac{n}{d}|\beta|^{\frac{2n}{d}}r^{\frac{2n}{d}-1}\bar{\alpha}\ .
\ee
Equations \eqref{eq:area_post} and \eqref{eq:def_post} are not enough to determine all the parameters. To bypass this difficulty we make an ansatz on the parameter $\beta$ and later on we will justify that the choice made is correct.

\subsection{Ansatz on $\beta$ and the parameters of the conformal map}
\label{sec:ansatz}
To get an idea what the value of $\beta$ should be, it is natural, at first, to look at the case $d=n$. As we have seen above, the corresponding conformal map
is linear for all values of $t$:
\be
f(u)=r\left(u+\frac{\bar{t}}{r}\right)\ ,\qquad r=\sqrt{\frac{T}{n}}\ .
\ee
The post-critical form \eqref{eq:c_map_post} of the conformal map gives
\be
f(u)=r\frac{u-\beta}{u-\frac{1}{\bar{\beta}}} (u-\alpha)\ ,
\ee
which is linear if and only if $\beta=\frac{1}{\bar{\alpha}}$. This suggests to make the following ansatz:
\begin{shaded}
\be
\beta=\frac{1}{\bar{\alpha}}\ .
\ee
\end{shaded}
With this assumption, the total mass and deformation conditions assume the following form:
\begin{align}
\label{area_ab}
\frac{n}{T}r^{\frac{2n}{d}}\frac{2n-d}{n}|\alpha|^{2-\frac{2n}{d}}&=1\\
\label{def_ab}
-\frac{n}{d}r^{\frac{2n}{d}-1}|\alpha|^{-\frac{2n}{d}}\bar{\alpha}&=t\ .
\end{align}
\begin{proposition}
\label{prop:r_post} 
Assuming $|t|>t_{cr}$, the following explicit formulae hold:
\be
r=\left(\frac{T}{2n-d}\right)^{\frac{1}{2}}\left(\frac{d}{n}|t|\right)^{\frac{d-n}{2n-d}}\ ,
\ee
and
\be
\alpha= -\frac{\bar{t}}{t}\left(\frac{T}{2n-d}\right)^{\frac{1}{2}}\left(\frac{d}{n}|t|\right)^{-\frac{n}{2n-d}}\ .
\ee
Moreover, the condition $|\alpha|<1$ is satisfied for all $|t|>t_{cr}$.
\end{proposition}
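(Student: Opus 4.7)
The structure of the system \eqref{area_ab}--\eqref{def_ab} is two equations in the three real unknowns $r>0$, $|\alpha|>0$, and $\arg\alpha$. The plan is to (i) take moduli in \eqref{def_ab} so as to obtain a coupled pair of real equations in $r$ and $|\alpha|$ alone, (ii) solve that $2\times 2$ algebraic system explicitly for the moduli, and (iii) read off the phase of $\alpha$ from \eqref{def_ab} directly, using that all other factors appearing there are real and positive.

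For step (ii), the key observation is that \eqref{area_ab} is a monomial relation in $(r,|\alpha|)$ of bidegree $(\tfrac{2n}{d},\,2-\tfrac{2n}{d})$, while the modulus of \eqref{def_ab} is a monomial relation of bidegree $(\tfrac{2n}{d}-1,\,1-\tfrac{2n}{d})$, so forming the right quotient isolates $(r/|\alpha|)^{(2n-d)/d}$ as an explicit power of $|t|$. Substituting back into \eqref{area_ab} then decouples $|\alpha|^{2}$ as a pure power of $|t|$, and hence $r$ as well. The resulting closed forms match the proposition after a straightforward verification of the fractional exponents. The phase of $\alpha$ is then immediate: in \eqref{def_ab} the factor multiplying $\bar\alpha$ is positive real, which forces $\alpha=-(\bar t/|t|)\,|\alpha|$, and substituting the formula for $|\alpha|$ produces the stated expression.

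It remains to check $|\alpha|<1$ for $|t|>t_{cr}$. From the closed form $|\alpha|^{2}=\tfrac{T}{2n-d}\bigl(\tfrac{d|t|}{n}\bigr)^{-\frac{2n}{2n-d}}$, the inequality $|\alpha|^{2}<1$ is equivalent, after taking the appropriate root, to $|t|>\tfrac{n}{d}\bigl(\tfrac{T}{2n-d}\bigr)^{(2n-d)/(2n)}$, and the right-hand side is precisely $t_{cr}$. The entire computation is algebraic with no essential obstacle; the main care required is in bookkeeping the fractional exponents $\tfrac{2n}{d}$, $\tfrac{d}{2n-d}$, and $\tfrac{2n-d}{2n}$, and the definition of $t_{cr}$ is calibrated exactly so that this last comparison comes out as an equivalence rather than a mere inequality, which is what makes the critical value the genuine threshold between the pre- and post-critical regimes.
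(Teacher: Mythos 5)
Your proposal is correct and follows essentially the same route as the paper's proof: take the modulus of \eqref{def_ab} to isolate $r/|\alpha|$ as a power of $\tfrac{d}{n}|t|$ (note this step needs no quotient of the two equations, since the exponents of $r$ and $|\alpha|$ in \eqref{def_ab} already sum to zero), substitute into \eqref{area_ab} to get the closed forms, read the phase of $\alpha$ from the positivity of the real prefactor, and observe that $|\alpha|<1$ is equivalent to $|t|>t_{cr}$. Your phase determination $\alpha=-(\bar t/|t|)\,|\alpha|$ agrees with the form stated in Theorem \ref{thm:main}, which is the correct one.
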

\begin{proof}
From Eq.~\eqref{def_ab} we obtain
\be
|\alpha|=\left(\frac{d}{n}|t|\right)^{-\frac{d}{2n-d}}r\ ,
\ee
which, with equation \eqref{area_ab} gives the explicit form for $r$ and $\alpha$ above. 
The condition
\be
|\alpha|=\left(\frac{T}{2n-d}\right)^{\frac{1}{2}}\left(\frac{d}{n}|t|\right)^{-\frac{n}{2n-d}}<1
\ee
rewritten in terms of $|t|$ is equivalent to
\be
|t|>\frac{n}{d}\left(\frac{T}{2n-d}\right)^{\frac{2n-d}{2n}}=t_{cr}\ ,
\ee
i.e., $|\alpha|<1$ is always satisfied in the post-critical regime $|t|>t_{cr}$.
\end{proof}

\section{Variational inequalities}
\label{sec:proof}
Now we are ready to conclude the proof the main result of this paper.

According to Prop.~\ref{prop:r_pre}, for $0<|t|<t_{cr}$ a conformal map of the form \eqref{eq:c_map_pre} satisfies the assertions of the theorem.
The complementary result Prop.~\ref{prop:r_post} shows that for $|t|>t_{cr}$ the ansatz \eqref{eq:c_map_post} with $\beta=\frac{1}{\bar{\alpha}}$ leads to a system of equations for the parameters that have a solution with the stated properties.

To conlude the proof we need to show that the variational inequality \eqref{eq:var_ineq} holds in both cases for the proposed measures.

We have
\be
\partial_{z}E(z) = W'(z\bar{z})\bar{z}-P'(z) + \varphi_{-}(z)\qquad z \in D_{-}\ .
\ee
This is identically zero on $\partial K$.
Since $E(z)=F$ on the boundary of $K$ and 
\be
E(z) \to \infty \qquad |z|\to \infty\ ,
\ee
the effective potential can go below the value $F$ in $D_{-}$ only if $E(z)$ has a critical point $z_0 \in D_{-}$ such that
\be
\label{critical_point_eq}
\left.\partial_{z}E(z)\right|_{z=z_0} = W'(z_0\bar{z_0})\bar{z_0}-P'(z_0) + \varphi_{-}(z_0)=0\ .
\ee
Therefore to prove the variational inequality \eqref{eq:var_ineq} it is sufficient to show that the derivative does not have any critical points in $D_{-}$.

To this end, it is enough to show that \eqref{critical_point_eq} is never satisfied, i.e., in terms of the uniformizing coordinate $u$,
\be
\partial_{z}E(z)|_{z=f(u)}=W'(f(u)\overline{f(u)})\overline{f(u)}-P'(f(u))+\varphi_{-}(f(u))\not=0 \qquad |u|>1\ .
\ee
{\bf Pre-critical case.}
\begin{align}
\partial_{z}E(z)|_{z=f(u)}&=W'(f(u)\overline{f(u)})\overline{f(u)}-\frac{nr^{\frac{2n}{d}}}{T}\left(1-\frac{\alpha}{u}\right)\left(1-\bar{\alpha}u\right)\frac{1}{f(u)}\\
&=\frac{nr^{\frac{2n}{d}}}{T}\left(1-\frac{\alpha}{u}\right)\frac{1}{f(u)}\left[|u|^{\frac{2n}{d}}\left(1-\frac{\bar{\alpha}}{\bar{u}}\right)-(1-\bar{\alpha}u)\right]\ .
\end{align}
Since $|\alpha|<1$, this expression vanishes for some $u$ with $|u| > 1$ only if
\be
|u|^{\frac{2n}{d}}\left(1-\frac{\bar{\alpha}}{\bar{u}}\right)-(1-\bar{\alpha}u)=0\ ,
\ee
which implies that
\be
|u|^{\frac{2n}{d}-1}=\left|\frac{1-\bar{\alpha}u}{u-\alpha}\right|\ .
\ee
This is impossible since
\be
|u|^{\frac{2n}{d}-1} >1 \quad \mbox{ and }\quad \left|\frac{1-\bar{\alpha}u}{u-\alpha}\right| < 1\ .
\ee
{\bf Post-critical case.} Here the derivative of the effective potential is
\begin{align}
\partial_{z}E(z)|_{z=f(u)}&= W'(f(u)\overline{f(u)})\overline{f(u)}-\frac{nr^{\frac{2n}{d}}}{T|\alpha|^{\frac{2n}{d}}}\left(1-\frac{\alpha}{u}\right)\left(1-\bar{\alpha}u\right)\frac{1}{f(u)}\\
&=\frac{nr^{\frac{2n}{d}}}{T|\alpha|^{\frac{2n}{d}}}\left(1-\frac{\alpha}{u}\right)\frac{1}{f(u)}\left[|u|^{\frac{2n}{d}}\left|\frac{1-\bar{\alpha}u}{u-\alpha}\right|^{\frac{2n}{d}}\left(1-\frac{\bar{\alpha}}{\bar{u}}\right)-(1-\bar{\alpha}u)\right]\ .
\end{align}
Since $|\alpha|<1$, this may vanish for some $u$ in the exterior of the unit disk only if
\be
|u|^{\frac{2n}{d}}\left|\frac{1-\bar{\alpha}u}{u-\alpha}\right|^{\frac{2n}{d}}\left(1-\frac{\bar{\alpha}}{\bar{u}}\right)-(1-\bar{\alpha}u)=0\ ,
\ee
which implies
\be
\left|u\frac{1-\bar{\alpha}u}{u-\alpha}\right|=1\ .
\ee
As a consequence we have
\be
1-\bar{\alpha}u=1-\frac{\bar{\alpha}}{\bar{u}}\ ,
\ee
and thus $|u|=1$, a contradiction. The variational inequality is hence satisfied in both cases and this concludes the proof of the assertions of Theorem \ref{thm:main}.

\section*{Acknowledgements}
D. M. is grateful to K. McLaughlin for suggesting the problem and for the support provided during the initial stages of the present work.
The authors would like to thank T. Grava for helpful comments and suggestions.

The present work was supported by the FP7 IRSES project  RIMMP \emph{Random and Integrable models in Mathematical Physics 2010-2014},
the ERC project \emph{FroM-PDE Frobenius Manifolds and Hamiltonian Partial Differential Equations  2009-13} and the 
MIUR Research Project  \emph{Geometric and analytic theory of Hamiltonian systems in finite and infinite dimensions}.

\appendix
\section{Univalency}
\label{sec:univalency}
\subsubsection*{Pre-critical case}
\begin{definition}
A map $f:\{|u|>1\}\rightarrow \mathbb{C}$ is \emph{starlike} if is univalent and the compact complement of its image is star-shaped w.r.t. the origin.
\end{definition}
\begin{theorem}[in \cite{Pommenrenke}]
Let $f:\{|u|>1\}\rightarrow \mathbb{C}$, then  $f$ is starlike iff
\begin{equation}
\label{univ_cond}
\Re\left(u\frac{f'(u)}{f(u)}\right)>0 \qquad |u|>1\ .
\end{equation}
\end{theorem}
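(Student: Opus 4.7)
The plan is to reduce the exterior starlikeness criterion to the classical interior version (due to Study and Nevanlinna) via the inversion $g(z) = 1/f(1/z)$, and then recall the standard proof of the interior statement.

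First I would check that $g$ is a well-behaved holomorphic map of the unit disc. Since $f$ is univalent on $\{|u|>1\}$ with image $\C \setminus K$, where $K$ is compact and star-shaped from $0$, one has $0 \in K$ and so $f$ is nowhere vanishing; hence $g$ is holomorphic on $\{0 < |z| < 1\}$. The normalization $f(u) = ru + \mathcal{O}(1)$ as $u \to \infty$ yields $g(z) = z/r + \mathcal{O}(z^2)$ near $z=0$, so $g$ extends to the whole disc with $g(0)=0$ and $g'(0) = 1/r \neq 0$. Next, the inversion $w \mapsto 1/w$ sends $\C \setminus K$ biholomorphically onto a bounded domain $D'$ with $0 \in D'$, and I would verify that $K$ is star-shaped from $0$ if and only if $D'$ is. This uses the invariance of each ray through $0$ under $w \mapsto 1/w$, which directly translates ``every such ray meets $\partial K$ at exactly one point'' into the same property for $\partial D' = 1/\partial K$.

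Second, I would record the direct identity
\be
z\frac{g'(z)}{g(z)} = u\frac{f'(u)}{f(u)}, \qquad u = \frac{1}{z},
\ee
obtained by differentiating $g(z) = 1/f(1/z)$. The hypothesis $\Re(uf'(u)/f(u)) > 0$ on $\{|u|>1\}$ thereby becomes $\Re(zg'(z)/g(z)) > 0$ on $\{0 < |z| < 1\}$, and the value at the origin is $1$ by the normalization. The theorem is thus equivalent to the classical statement that a holomorphic $g : \{|z|<1\} \to \C$ with $g(0)=0$ and $g'(0) \neq 0$ is univalent with starlike image if and only if $\Re(z g'(z)/g(z)) > 0$ throughout the unit disc.

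For the interior criterion itself, the central computation is
\be
\frac{\partial}{\partial \theta}\arg g(re^{i\theta}) = \Re\!\left(re^{i\theta}\frac{g'(re^{i\theta})}{g(re^{i\theta})}\right),
\ee
valid wherever $g$ does not vanish on $|z|=r$. For the ``only if'' direction, an application of the Schwarz lemma to $\zeta \mapsto g^{-1}(tg(\zeta))$ shows that each image $g(\{|z| \le r\})$ is itself star-shaped from $0$, so $\arg g$ is non-decreasing on every circle $|z|=r$; hence $\Re(zg'/g) \ge 0$ on the punctured disc, and the strong minimum principle for this harmonic function, together with its value $1$ at the origin, upgrades the inequality to strict positivity. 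For the ``if'' direction, strict positivity forces $\theta \mapsto g(re^{i\theta})$ to be a simple closed curve of winding number $+1$ around $0$ with strictly monotone argument, hence bounding a domain starlike from $0$; the nested family of these domains sweeps out $g(\{|z|<1\})$, which is therefore starlike, and univalency of $g$ on the whole disc follows from the argument principle on each annulus.

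The main obstacle is the ``if'' direction: one must upgrade the local injectivity of $g$, immediate from the non-vanishing of $g'$ ensured by $\Re(zg'/g)>0$, to global injectivity. This requires showing that the image curves $g(\{|z|=r\})$ for distinct $r$ are disjoint Jordan curves nested around the origin, which is the standard but mildly delicate argument-principle step of the Study--Nevanlinna proof.
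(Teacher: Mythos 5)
The paper does not prove this statement at all: it is quoted verbatim from Pommerenke's book on univalent functions, so there is no in-paper argument to compare yours against. Judged on its own merits, your reduction is correct and is essentially the textbook route: the inversion $g(z)=1/f(1/z)$ does satisfy $zg'(z)/g(z)=uf'(u)/f(u)$ at $u=1/z$, the normalization $f(u)=ru+\mathcal{O}(1)$ gives $g(0)=0$, $g'(0)=1/r\neq 0$, and the ray-by-ray interval description of $K\cap\{se^{i\theta}:s\geq 0\}$ shows that $K$ is star-shaped with respect to $0$ exactly when $g(\{|z|<1\})$ is, so the exterior criterion is indeed equivalent to the classical Study--Nevanlinna interior criterion, whose proof you outline correctly (Schwarz lemma for necessity, monotone argument plus the argument principle for sufficiency). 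A few points should be tightened if this were written out. First, $D'$ need not be bounded: when $0\in\partial K$ (which actually occurs in this paper at $|t|=t_{cr}$) the inverted domain is unbounded; fortunately boundedness is never used, so simply drop that claim. Second, in the ``if'' direction you must justify defining $g$ at all, i.e.\ that $f$ has no zeros in $|u|>1$: this does follow from the hypothesis, since a zero of $f$ of order $k$ at $u_0$ makes $uf'/f$ have a simple pole there with nonzero residue $ku_0$, near which its real part takes negative values --- but the sentence needs to be said, and likewise you need the simple pole of $f$ at infinity (the normalization) to conclude that the complement of $f(\{|u|>1\})$ is compact, so that the image domain is of the ``exterior'' type required by the definition of starlike used here. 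Third, your ``only if'' step ``each $g(\{|z|\leq r\})$ star-shaped implies $\arg g(re^{i\theta})$ non-decreasing'' needs a short local argument (e.g.\ comparing the inward normal of the level curve with the radial direction, or Duren's direct limit $t\to 1^-$ in the Schwarz inequality), and in the ``if'' direction the winding-number count should be run on each disc $|z|<r$ (not ``annulus''): strict monotonicity of the argument together with total increase $2\pi$ (one simple zero of $g$ at the origin) makes $g(re^{i\theta})$ a Jordan curve, and the argument principle then gives exactly one preimage for interior points and none for exterior ones. These are all routine completions rather than gaps in the idea, so the proposal stands as a correct sketch of the cited result.
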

In order to prove that our pre-critical map 
\begin{equation}
f(u)=ru\left(1-\frac{\alpha}{u}\right)^{\frac{d}{n}}
\end{equation}
is univalent, it suffices to show that \eqref{univ_cond} holds, i.e.
\begin{equation}
\Re\left(1+\frac{d}{n}\frac{\alpha}{u-\alpha}\right)>0 \ .
\end{equation}
Thus we just need to check that
\begin{equation}
\Re\left(\frac{\alpha}{u-\alpha}\right)>-\frac{1}{2} \quad \mbox{for}\ |u|>1\ .
\end{equation}
It is easy to see that $\frac{\alpha}{u-\alpha}$ maps the exterior of the circle of radius $|\alpha|$ into the halfplane $\Re(z)>-\frac{1}{2}$. This proves that our conformal map is univalent in $|u|>1$.

\subsubsection*{Post-critical case}
Let us consider the map
\begin{equation}
f(u)=-\frac{r}{\bar{\alpha}}\frac{1-\bar{\alpha}u}{u-\alpha}u\left(1-\frac{\alpha}{u}\right)^{\frac{d}{n}}\ .
\end{equation}
We can rescale $u\rightarrow \frac{|\alpha|}{\bar{\alpha}}u$ so that
\begin{equation}
f(u)=-\frac{r}{\bar{\alpha}}\frac{1-|\alpha|u}{u-|\alpha|}u\left(1-\frac{|\alpha|}{u}\right)^{\frac{d}{n}}=-\frac{r}{\bar{\alpha}}(1-|\alpha|u)\left(1-\frac{|\alpha|}{u}\right)^{\frac{d}{n}-1}\ .
\end{equation}

We need the following lemma:
\begin{lemma}[in \cite{Pommenrenke}]
\label{lemma:univ}
Let $f$ be a map analytic in $\{|u|\geq1\}$ and injective on $\{|u|=1\}$, then $f$ is univalent in $\{|u|\geq1\}$.
\end{lemma}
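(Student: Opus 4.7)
The plan is to combine the Jordan curve theorem with a careful application of the argument principle. As a first step, since $f$ is continuous and injective on the compact circle $|u|=1$, its image $\Gamma:=f(\{|u|=1\})$ is a Jordan curve in $\mathbb{C}$; by the Jordan curve theorem, $\mathbb{C}\setminus\Gamma$ splits into a bounded component $\Omega_+$ and an unbounded component $\Omega_-$. To establish univalency in $\{|u|>1\}$ it then suffices to show that each $w\notin\Gamma$ has at most one preimage under $f$ in $\{|u|>1\}$, which I would do by counting preimages with the argument principle.

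The second step is to apply the argument principle on the annulus $\{1<|u|<R\}$ and let $R\to\infty$. In the situations of interest to this paper the map $f$ is meromorphic in $\{|u|\ge 1\}$ with a single simple pole at $u=\infty$ coming from the leading factor $ru$ (so $f(u)\sim ru$ with $r>0$), hence the number of preimages of $w\notin\Gamma$ inside the annulus equals
\begin{equation}
\frac{1}{2\pi i}\oint_{|u|=R}\frac{f'(u)\,du}{f(u)-w}-\frac{1}{2\pi i}\oint_{|u|=1}\frac{f'(u)\,du}{f(u)-w}\, .
\end{equation}
Passing to the limit $R\to\infty$, the first integral converges to $1$ (the winding number of a large image curve of radius $\sim rR$ around the fixed point $w$), while the second is the winding number $n(\Gamma,w)$ of $\Gamma$ around $w$.

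The final step is to combine these. Non-negativity of the preimage count forces $n(\Gamma,w)\in\{0,1\}$; on the other hand, the winding number of a Jordan curve jumps by exactly $1$ across the curve, so $n(\Gamma,\cdot)=1$ on $\Omega_+$ and $n(\Gamma,\cdot)=0$ on $\Omega_-$. This yields exactly one preimage for each $w\in\Omega_-$ and none for $w\in\Omega_+$, so $f$ is a bijection from $\{|u|>1\}$ onto $\Omega_-$ and, together with the assumed injectivity on $|u|=1$, univalent on $\{|u|\ge 1\}$. The step I expect to be the main obstacle is the orientation bookkeeping: one cannot read off the orientation of $\Gamma$ from geometric features of the (unknown) curve, and must instead extract it from the asymptotic behaviour of $f$ at infinity. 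The argument principle calculation above does exactly this without any circular use of the very univalency to be proved, which is why I would organise the whole proof around it.
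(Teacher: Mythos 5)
Your setup is fine up to and including the identity $N(w)=1-n(\Gamma,w)$, where $N(w)$ is the number of preimages of $w\notin\Gamma$ in $\{|u|>1\}$ (counted with multiplicity) and $n(\Gamma,w)$ is the winding number of $\theta\mapsto f(e^{i\theta})$ about $w$. The gap is the sentence ``non-negativity of the preimage count forces $n(\Gamma,w)\in\{0,1\}$'': non-negativity only gives $n(\Gamma,w)\le 1$, and the value $n(\Gamma,w)=-1$ on the bounded component $\Omega_+$ is perfectly consistent, since it yields $N(w)=2\ge 0$. For a Jordan curve the winding number about interior points is $\pm 1$ according to orientation, so your computation leaves the orientation of $\Gamma$ undetermined --- which is exactly the point you yourself flagged as the main obstacle. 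The asymptotics $f(u)\sim ru$ only produce the $+1$ coming from the outer circle $|u|=R$; they say nothing about the sense in which $f(e^{i\theta})$ traverses $\Gamma$, so the argument principle calculation does not ``do exactly this''.

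Moreover, the gap cannot be closed from the hypotheses you use, because with the reading needed for this paper (analytic in $\{|u|\ge1\}$ with a simple pole at infinity and positive leading coefficient) the statement is false as it stands: take $f(u)=u+4/u$. It is analytic in $\{|u|\ge 1\}$, $f(u)\sim u$ at infinity, and it maps $|u|=1$ injectively onto the ellipse $\theta\mapsto 5\cos\theta-3i\sin\theta$ (traversed clockwise), yet $f(2i)=f(-2i)=0$ with $|\pm 2i|=2>1$; here every point of $\Omega_+$ has exactly two preimages in $\{|u|>1\}$, i.e.\ precisely the case $n(\Gamma,\cdot)=-1$ that your argument fails to exclude, and your proof as written would ``establish'' univalence of this map too. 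So an exterior Darboux--Picard statement needs an additional input fixing the orientation --- equivalently, that points just outside $|u|=1$ are mapped into the unbounded component $\Omega_-$, or that some point of $\Omega_+$ is omitted by $f$; once that is known, $N\equiv 0$ on $\Omega_+$ and $N\equiv 1$ on $\Omega_-$, and openness of $f$ then also rules out exterior points landing on $\Gamma$, giving univalence on the closed set $\{|u|\ge 1\}$ (a point your write-up also skips). Note that the paper itself does not prove Lemma~\ref{lemma:univ} but quotes it from \cite{Pommenrenke}; when it is applied to the post-critical map, the orientation-type condition must be (and can be) verified for that specific $f$, e.g.\ by checking directly that the boundary curve is positively oriented, or by a continuity-in-$t$ argument from the critical map.
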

So we just need to prove that the image of unit circle has no self--intersections, i.e. $f(u)=f(v)$ iff $u=v$. We first look for the points $u=\mbox{e}^{i \mu}$ and $v=\mbox{e}^{i \nu}$  such that $|f(u)|=|f(v)|$:
\begin{align}
 \left|\frac{1-|\alpha|u}{u-|\alpha|}\right||u|\left|1-\frac{|\alpha|}{u}\right|^{\frac{d}{n}}&= \left|\frac{1-|\alpha|v}{v-|\alpha|}\right||v|\left|1-\frac{|\alpha|}{v}\right|^{\frac{d}{n}}\\
 \left|1-\frac{|\alpha|}{u}\right|^{\frac{d}{n}}&=\left|1-\frac{|\alpha|}{v}\right|^{\frac{d}{n}}\\
 \left|1-|\alpha|\mbox{e}^{-i \mu}\right|^{\frac{d}{n}}&=\left|1-|\alpha|\mbox{e}^{-i \nu}\right|^{\frac{d}{n}}\\
 \left|1-|\alpha|\mbox{e}^{-i \mu}\right|^2&=\left|1-|\alpha|\mbox{e}^{-i \nu}\right|^2\\
 2-2|\alpha|\cos(\mu)&=2-2|\alpha|\cos(\nu)\\
\cos(\mu)&=\cos(\nu)\ .
\end{align}
Thus $|f(\mbox{e}^{i \mu})|=|f(\mbox{e}^{i \nu})|$ iff $\nu=\pm\mu$.\\
Now we just need to show that $f(\mbox{e}^{i \mu})\neq f(\mbox{e}^{-i \mu})$ for $\mu\neq k\pi$.

\begin{align}
-\frac{r}{\bar{\alpha}}(1-|\alpha|\mbox{e}^{i \mu})\left(1-|\alpha|\mbox{e}^{-i \mu}\right)^{\frac{d}{n}-1}&=-\frac{r}{\bar{\alpha}}(1-|\alpha|\mbox{e}^{-i \mu})\left(1-|\alpha|\mbox{e}^{i \mu}\right)^{\frac{d}{n}-1}\\
(1-|\alpha|\mbox{e}^{i \mu})\left(1-|\alpha|\mbox{e}^{-i \mu}\right)^{\frac{d}{n}-1}&=(1-|\alpha|\mbox{e}^{-i \mu})\left(1-|\alpha|\mbox{e}^{i \mu}\right)^{\frac{d}{n}-1}\ .
\end{align}
The l.h.s. and the r.h.s. are complex conjugates, so this equation has solution iff the image of $\mbox{e}^{i \mu}$ through the map
\begin{equation}
\label{rea}
(1-|\alpha|\mbox{e}^{i \mu})\left(1-|\alpha|\mbox{e}^{-i \mu}\right)^{\frac{d}{n}-1}
\end{equation}
is real.\\
Let us change the variable $y:=1-|\alpha|\mbox{e}^{-i \mu}$: since $|\alpha|<1$, $y$ is contained in the unit disk centered at $1$, which is contained in the half plane $\Re(y)>0$. In terms of $y$, \eqref{rea} assumes the simple form
\begin{equation}
\label{rev}
\frac{\bar{y}}{y}y^{\frac{d}{n}}\ .
\end{equation}
The map $w=y^{\frac{d}{n}}$ sends the halfplane $\Re(y)>0$ into the set $\left\{w\in\mathbb{C}|-\frac{d}{2n}\pi<\arg(w)<\frac{d}{2n}\pi\right\}$, which, since $0<d<2n$, is contained in $\mathbb{C}\setminus\mathbb{R}_{-}$.\\ Written in its polar form, $y=\rho \mbox{e}^{i\lambda}$, \eqref{rev} becomes
\begin{equation}
\rho^{\frac{d}{n}}\mbox{e}^{-i\frac{2n-d}{n}\lambda}\ .
\end{equation}
Thus, since $-\frac{\pi}{2}<\lambda<\frac{\pi}{2}$, the previous number is real iff $\lambda=0$, i.e. $y\in\mathbb{R}$. This implies that $1-|\alpha|\mbox{e}^{-i\mu}\in\mathbb{R}$, but this can happen iff $\mu=k\pi$.

Therefore Lemma \ref{lemma:univ} gives that the post-critical map $f(u)$ is univalent in $|u|>1$.

\section{Analysis of the equation for the conformal radius}
\label{sec:conf_r}
Let us consider equation \eqref{eq:r_eq}
$$
r^{\frac{4n}{d}-2}-\frac{T}{n}r^{\frac{2n}{d}-2}+\frac{n-d}{n}\frac{d^2}{n^2}|t|^2=0\ .
$$
for the conformal radius $r$ as a function of $t$. We need to show that \eqref{eq:r_eq} has a unique positive solution $r=r_0$ such that
\be
\label{ineq_a}
|\alpha|=\frac{d}{n}|t|r_0^{1-\frac{2n}{d}} <1\ ,
\ee
or equivalently
\begin{equation}
\label{ineq_a1}
|\alpha|^{2}=\frac{n}{n-d}\left(\frac{T}{n}r_{0}^{-\frac{2n}{d}}-1\right) <1\ .
\end{equation}
Solving the critical equation $|\alpha|=1$ and using \eqref{eq:r_eq} we can obtain the critical values $t_{cr}$ and $r_{cr}:=r(t_{cr})$ given by
\begin{align}
t_{cr}&=\frac{n}{d}\left(\frac{T}{2n-d}\right)^{1-\frac{d}{2n}}\\
r_{cr}&=\left(\frac{T}{2n-d}\right)^{\frac{d}{2n}}.
\end{align}

Clearly the formulae above make sense only for $d\neq n$ and $d\neq 2n$. However, these cases are trivial so we can restrict ourselves to the study of the cases $0< d< n$ and $n<d<2n$.

\begin{proposition}
\label{prop:r_pre}
Assume that $|t|<t_{cr}$.
\begin{itemize}
\item
For $0< d<n$, Eq.~\eqref{eq:r_eq} has two positive solutions $r_{\pm}(|t|)$, with 
\be
0\leq r_{-}(|t|)<r_{+}(|t|)\leq  \left(\frac{T}{n}\right)^{\frac{d}{2n}} \quad \mbox{and}\quad  r_{-}(0)=0\ ,\ r_{+}(0)=  \left(\frac{T}{n}\right)^{\frac{d}{2n}}\ .
\ee 
With the choice $r=r_{+}(|t|)$ the inequality \eqref{ineq_a} is satisfied whereas the other solution $r=r_{-}(|t|)$ is not compatible with \eqref{ineq_a}.

\item For $n< d < 2n$ Eq.~\eqref{eq:r_eq} has a unique positive solution $r_{0}(|t|)$ that is compatible with the inequality \eqref{ineq_a}.
\end{itemize}
\end{proposition}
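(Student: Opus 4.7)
The plan is to reduce \eqref{eq:r_eq} to a one-parameter family of equations in a single new variable and then split the analysis by the sign of $n - d$. After substituting $y = r^{2n/d}$, a short manipulation recasts \eqref{eq:r_eq} as $\phi(y) = T/n$, where
\be
\phi(y) := y + \frac{d^{2}(n-d)}{n^{3}}|t|^{2}\, y^{(d-n)/n}.
\ee
A one-line computation also rewrites the admissibility condition \eqref{ineq_a1} as $|\alpha|^{2} = (T - ny)/((n-d)y)$, so that $|\alpha|^{2} \in [0,1)$ is equivalent to $y \in (T/(2n-d), T/n]$ when $0 < d < n$, and to $y \in [T/n, T/(2n-d))$ when $n < d < 2n$. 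A direct algebraic check using the explicit expressions from Theorem \ref{thm:main} verifies that $\phi(T/(2n-d)) = T/n$ holds precisely when $|t| = t_{cr}$; this identification of the admissibility boundary $y = T/(2n-d)$ with the parameter value $t_{cr}$ is the central bookkeeping.

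For $0 < d < n$ the coefficient of the second term in $\phi$ is positive and the exponent $(d-n)/n$ is negative, so $\phi(0^{+}) = \phi(+\infty) = +\infty$, and the formula for $\phi''$ yields strict convexity on $(0,\infty)$. Hence $\phi(y) = T/n$ has $0$, $1$, or $2$ positive solutions depending on whether the minimum value of $\phi$ exceeds, equals, or lies below $T/n$. A brief computation at the minimizer $y^{*}(|t|)$ produces the identity $\phi(y^{*}) = \frac{2n-d}{n-d}\, y^{*}$, showing that the two roots coalesce at a threshold value $|t| = t^{*}$ where $y^{*}(t^{*}) = T(n-d)/(n(2n-d))$; since $T(n-d)/(n(2n-d)) < T/(2n-d)$, this coalescence value satisfies $t^{*} > t_{cr}$. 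For $0 < |t| < t^{*}$ one therefore obtains two continuous branches $y_{-}(|t|) < y^{*}(|t|) < y_{+}(|t|)$ with limits $y_{-}(0) = 0$ and $y_{+}(0) = T/n$. The lower branch stays strictly below $y^{*}(t^{*}) < T/(2n-d)$, and hence never lies in the admissible window. The upper branch decreases continuously from $T/n$ and, by the boundary identification above, reaches $T/(2n-d)$ precisely at $|t| = t_{cr}$, so $y_{+}(|t|) \in (T/(2n-d), T/n]$ for all $|t| \in [0, t_{cr})$, as required. Ordering and boundary values for $r_{\pm}$ transfer through the increasing map $r = y^{d/(2n)}$.

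For $n < d < 2n$ the coefficient in $\phi$ becomes negative while $\phi'' > 0$ persists. Now $\phi(0) = 0$, $\phi'(0^{+}) = -\infty$, and $\phi(+\infty) = +\infty$, so $\phi$ decreases from $0$ to a single strictly negative minimum and then increases monotonically to $+\infty$. Consequently $\phi(y) = T/n > 0$ has a unique positive solution $y_{0}$, located on the increasing branch to the right of the minimizer. Evaluating $\phi$ at the endpoints of the admissibility interval gives $\phi(T/n) < T/n$ (the second term is strictly negative) and $\phi(T/(2n-d)) > T/n$ for $|t| < t_{cr}$, with equality exactly at $|t| = t_{cr}$ by the boundary identification. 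Monotonicity on the increasing branch then sandwiches $y_{0}$ in $(T/n, T/(2n-d))$, giving $|\alpha| < 1$.

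The main obstacle is the pre-critical analysis for $0 < d < n$: one has to continuously track the two branches $y_{\pm}$ from their degenerate limits at $|t| = 0$ and identify which one remains in the admissible window. This succeeds thanks to the algebraic coincidence that the admissibility threshold $|\alpha| = 1$, equivalently $y = T/(2n-d)$, is attained along the upper branch at exactly the value $t_{cr}$ of Theorem \ref{thm:main}, while the minimum-value identity $\phi(y^{*}) = \frac{2n-d}{n-d}\, y^{*}$ forces $t^{*} > t_{cr}$, so the merger of branches necessarily falls strictly outside the admissible range of $|t|$.
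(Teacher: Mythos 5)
Your argument is correct and follows essentially the same route as the paper: after the substitution $y=r^{2n/d}$, your analysis of the convex function $\phi$ (coalescence threshold versus $t_{cr}$, then bracketing of the admissible branch via the window $y\in(T/(2n-d),T/n]$) parallels the paper's direct study of $y(r)=r^{4n/d-2}-\tfrac{T}{n}r^{2n/d-2}$, its minimizer $r_{min}$, the threshold $t_{max}$, and the comparison of $|\alpha|^2$ against the values at $r_{cr}$ and $r_{min}$. The only step worth one extra line is the deduction $t^{*}>t_{cr}$: note explicitly that at $|t|=t_{cr}$ the point $y=T/(2n-d)$ is a root of $\phi(y)=T/n$, so $t_{cr}\le t^{*}$, and equality is excluded since at $|t|=t^{*}$ the unique (double) root is $y^{*}(t^{*})=T(n-d)/(n(2n-d))\neq T/(2n-d)$.
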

\begin{proof}
Let $0< d < n$. Consider the function
\be
y(r)= r^{\frac{4n}{d}-2}-\frac{T}{n}r^{\frac{2n}{d}-2}
\ee
on the non-negative real axis. The only roots of $y(r)=0$ are
\be
r=0 \quad \mbox{ and }\quad r= \left(\frac{T}{n}\right)^{\frac{d}{2n}}\ ,
\ee
and $y(r)$ has a unique minimum at
\be
r_{min}=\left(\frac{T}{n}\frac{n-d}{2n-d}\right)^{\frac{d}{2n}}
\ee
with
\be
y(r_{min})= -\frac{T}{2n-d}\left(\frac{T}{n}\frac{n-d}{2n-d}\right)^{\frac{n-d}{n}}\ .
\ee
Now it is easy to see that there exist precisely two solutions 
\be
0 < r_{-} < r_{min} < r_{+} < \left(\frac{T}{n}\right)^{\frac{d}{2n}}
\ee
for $0<|t|<t_{max}$ where
\be
t_{max}=\frac{n^2}{d}\left(\frac{T}{n}\frac{n-d}{2n-d}\right)^{\frac{n-d}{2n}}\sqrt{\frac{T}{n(n-d)(2n-d)}}\ .
\ee
Since
\be
\frac{t_{max}}{t_{cr}}=\left(\frac{n}{n-d}\right)^{\frac{d}{2n}}<1\ ,
\ee
this condition is always satisfied for $0<|t|<t_{cr}$. Moreover,
\be
\frac{r_{cr}}{r_{min}} =\left(\frac{n}{n-d}\right)^{\frac{d}{2n}}>1\ .
\ee
Therefore with the choice $r=r_{+}$ we have
\be
|\alpha|^{2}=\frac{n}{n-d}\left(\frac{T}{n}r_{+}^{-\frac{2n}{d}}-1\right)<\frac{n}{n-d}\left(\frac{T}{n}r_{cr}^{-\frac{2n}{d}}-1\right)=\frac{n}{n-d}\left(\frac{2n-d}{n}-1\right)=1 \ .
\ee
In order to prove that $r_{-}$ does not satisfy \eqref{ineq_a1} we write
\be
|\alpha|^{2}=\frac{n}{n-d}\left(\frac{T}{n}r_{-}^{-\frac{2n}{d}}-1\right)>\frac{n}{n-d}\left(\frac{T}{n}r_{min}^{-\frac{2n}{d}}-1\right)=\frac{n}{n-d}\left(\frac{2n-d}{n-d}-1\right)=\left(\frac{n}{n-d}\right)^2>1 \ .
\ee
Therefore the only positive solution compatible with \eqref{ineq_a} is $r=r_{+}(t)$.

For $n< d< 2n$ the function $y(r)$ is strictly increasing with
\be
y(r)\to -\infty \quad r\to 0_{+} \quad \mbox{ and }\quad y(r) \to \infty \quad r \to \infty\ ,
\ee
and therefore \eqref{eq:r_eq} has a unique solution for every value of $|t|$.
Since the unique root of $y(r)=0$ is given by
\be
r=\left(\frac{T}{n}\right)^{\frac{d}{2n}}\ ,
\ee
for $0<|t|<t_{cr}$ we have 
\be
\left(\frac{T}{n}\right)^{\frac{d}{2n}}<r_{0}(|t|)<r_{cr}
\ee
and hence
\be
|\alpha|^{2}=\frac{n}{d-n}\left(1-\frac{T}{n}r_{0}^{-\frac{2n}{d}}\right)<\frac{n}{d-n}\left(1-\frac{T}{n}r_{cr}^{-\frac{2n}{d}}\right)=\frac{n}{d-n}\left(1-\frac{2n-d}{n}\right)=1\ ,
\ee
as needed.
\end{proof}

 \bibliographystyle{abbrv}
 \bibliography{eq_op_dihedral}
\end{document}